\title{Spectral sequences in string topology}
\author{Lennart Meier}
\address{Mathematisches Institut\\
Endenicher Allee 60\\
53115 Bonn\\
Germany}
\email{lmeier@math.uni-bonn.de}
\urladdr{http://www.math.uni-bonn.de/people/lmeier/}
\newtheorem{thm}{Theorem}[section]    % Standard theorem environment
\newtheorem{lemma}[thm]{Lemma}
\newtheorem{prop}[thm]{Proposition}
\theoremstyle{definition}
\newtheorem{defi}[thm]{Definition}    % Definition environment with 
\newtheorem*{remark}{Remark}             % Unnumbered environment for remarks.
\newtheorem*{warning}{Warning}
\newcommand{\nbhd}{neighbourhood }
\newcommand{\tensor}{\otimes}
\newcommand{\N}{\mathbb{N}}
\newcommand{\h}{\textbf{h}}
\newcommand{\HH}{\mathbb{H}}
\newcommand{\mm}{L^n M\times_M L^n M}
\newcommand{\cp}{\mathbb{CP}}
\newcommand{\hp}{\mathbb{HP}}
\newcommand{\kp}{\mathbb{KP}}
\newcommand{\for}{\text{ for }}
\newcommand{\EE}{\mathcal{E}}
\DeclareMathOperator{\ev}{ev}
\DeclareMathOperator{\id}{id}
\DeclareMathOperator{\im}{im}
\DeclareMathOperator{\pr}{pr}
\DeclareMathOperator{\Tor}{Tor}
\DeclareMathOperator{\colim}{colim}
\begin{document}

\begin{abstract}    % type your abstract below
In this paper, we investigate the behavior of the Serre spectral sequence with respect to the algebraic structures of string topology in generalized homology theories, specificially with the Chas--Sullivan product and the corresponding coproduct and module structures. We prove compatibility for two kinds of fiber bundles: the fiber bundle $\Omega^n M\to L^n M\to M$ for an $h_*$--oriented manifold $M$ and the looped fiber bundle $L^n F\to L^n E\to L^n B$ of a fiber bundle $F\to E\to B$ of $h_*$--oriented manifolds. Our method lies in the construction of Gysin morphisms of spectral sequences. We apply these results to study the ordinary homology of the free loop spaces of sphere bundles and some generalized homologies of the free loop spaces of spheres and projective spaces. For the latter purpose, we construct explicit manifold generators for the homology of these spaces.
\end{abstract}

\maketitle

%%%%%%%%%%%%%%%%%%%%   Start of main body of article

\section{Introduction}
Let $h_*$ be a homology theory and $M$ be a $d$--dimensional $h_*$--oriented smooth manifold. In \cite{chas1999string}, Chas and Sullivan defined a product on the singular homology of the free loop space and Cohen and Jones generalized it in \cite{cohen2002homotopy} to the case of an arbitrary homology theory $h_*$. This product is now called the \textit{Chas--Sullivan product} and is of the form
\[h_p(LM)\otimes h_q(LM) \to h_{p+q-d}(LM). \]
Later several people generalized this product, giving a whole bunch of algebraic structures on different mapping spaces between manifolds, for example coproduct and module structures. A description of some of these can be found in the exposition paper \textit{Notes on string topology} by Cohen and Voronov \cite{cohen2005notes}.

To enhance the calculational perspectives, it is, of course, useful to understand the behavior of these algebraic structures in the Serre spectral sequence associated to certain fiber bundles. The first theorem of this kind was proven in by Cohen, Jones and Yan in \cite{cohen2003loop}. We will generalize their theorem to arbitrary homology theories. Denoting the Serre spectral sequence of a fiber bundle $\xi$ by $\EE(\xi)$ and by $[(a,b)]$ a bidegree shift, we can state two of our main results:
{\renewcommand{\thethm}{\ref{cohen2003loop}}
\begin{thm}Let $M$ be a $d$--dimensional $h_*$--oriented manifold. Then 
\begin{eqnarray*}\EE(\Omega^n M \to L^n M\to M)[(d,0)]\end{eqnarray*}
 can be equipped with the structure of a multiplicative spectral sequence which converges to the Chas--Sullivan product on $h_*(L^n M)$. Furthermore, the induced product on the $E^2$--term $H_{*+d}(M; \mathfrak{h}_q(\Omega^n M))$ is equal to the intersection product with coefficients in the local system of rings $\mathfrak{h}_*(\Omega^n M)$ whose multiplication is given by the Pontryagin product.\end{thm}}
{\renewcommand{\thethm}{\ref{fss}}
\begin{thm}Let $M\to N\to O$ be a fiber bundle of $h_*$--oriented manifolds of dimensions $m$, $n$ and $o$ respectively, with projection map $\pi$. Then 
\begin{eqnarray*}\EE(L^n M\to L^n N \to L^n O)[(o,m)]\end{eqnarray*}
 can be equipped with the structure of a multiplicative spectral sequence which converges to the Chas--Sullivan product on $h_*(L^n N)$. Furthermore, the induced product on the $E^2$--term $H_{p+m}(L^n O; \mathfrak{h}_{q+m}(L^n M))$ is equal to the Chas--Sullivan product with coefficients in the local system of rings $\mathfrak{h}_{*+m}(L^n O)$.\end{thm}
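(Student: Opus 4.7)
The strategy is to mirror the argument used for Theorem~\ref{cjy}, applying the general Gysin morphism of Serre spectral sequences from the previous section to the defining diagram of the Chas-Sullivan product on $L^n N$. Recall that this product is the composite
$$h_*(L^n N) \otimes h_*(L^n N) \to h_*(L^n N \times L^n N) \xrightarrow{\iota^!} h_{*-n}(L^n N \times_N L^n N) \xrightarrow{c_*} h_{*-n}(L^n N),$$
where $\iota$ is the codimension-$n$ inclusion of composable loops (those whose values at the chosen basepoint of $S^n$ coincide in $N$) and $c$ is the concatenation map. The key observation is that both $\iota$ and $c$ are morphisms of fibre bundles: they fit into commutative squares over the corresponding maps $\iota_O, c_O$ on $L^n O$, with fibres the corresponding maps $\iota_M, c_M$ on $L^n M$, and the codimensions split compatibly as $n = o + m$.

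I would then apply the Gysin morphism of spectral sequences to the bundle morphism
$$\bigl(L^n N \times_N L^n N \hookrightarrow L^n N \times L^n N\bigr)\ \longrightarrow\ \bigl(L^n O \times_O L^n O \hookrightarrow L^n O \times L^n O\bigr),$$
obtaining a map
$$\EE(L^n N \times L^n N \to L^n O \times L^n O)\ \longrightarrow\ \EE(L^n N \times_N L^n N \to L^n O \times_O L^n O)[(-o,-m)].$$
Pre-composing with the cross product pairing of Serre spectral sequences and post-composing with the spectral sequence map induced by the bundle morphism $c$ produces a pairing
$$\EE(L^n N \to L^n O) \otimes \EE(L^n N \to L^n O)\ \longrightarrow\ \EE(L^n N \to L^n O)[(-o,-m)].$$
Compatibility with differentials is part of the output of the Gysin machinery, and convergence to the Chas-Sullivan product on $h_*(L^n N)$ is automatic because, on the $E^\infty$-page, the spectral-sequence Gysin morphism reduces to $\iota^!$, the cross product pairing reduces to the Künneth cross product, and the concatenation morphism reduces to $c_*$.

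The remaining step, which is also the main obstacle, is the identification of the induced product on $E^2$. By the naturality of the spectral-sequence Gysin morphism, this $E^2$-pairing decomposes into a base operation on $H_*(L^n O; -)$ and a coefficient operation on $\mathfrak{h}_*(L^n M)$. The base operation is $(c_O)_* \circ \iota_O^!$ following the cross product, i.e.\ the Chas-Sullivan product on $L^n O$; the coefficient operation is the analogous construction on the fibre, hence the Chas-Sullivan product on $L^n M$. Together they realise the asserted Chas-Sullivan product with coefficients in the local system of Chas-Sullivan rings $\mathfrak{h}_{*+m}(L^n M)$. Making this decomposition rigorous is the delicate part: one must verify that the normal bundle of $\iota$ splits as the pullback of the normal bundle of $\iota_O$ together with the fibrewise normal bundle of $\iota_M$, and that the corresponding Thom classes behave compatibly under the spectral-sequence Gysin construction. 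This should reduce to the naturality clauses of the Gysin morphism of spectral sequences established in the preceding section, together with the identification of the $E^2$-product in Theorem~\ref{cjy} applied fibrewise.
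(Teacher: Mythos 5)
Your proposal is correct and follows the paper's own argument essentially verbatim: mirror the proof of Theorem \ref{cjy}, applying the Gysin machinery of the previous section to the defining diagram of the Chas--Sullivan product on $L^n N$ and pre/post-composing with the cross product and $\gamma_*$. The one point worth adding is that the ``delicate part'' you defer to the end is resolved in the paper not by constructing a single simultaneous Gysin morphism of bidegree $(-o,-m)$, but by factoring it through the intermediate bundle $\iota^*(\xi\times\xi)$ over $L^n O\times_O L^n O$: one first applies $s_B(\iota)$ (intersection on the base, codimension $o$) and then $s_F(L^n N\times_N L^n N)$ (intersection in the fibre, codimension $m$) --- precisely the splitting $n=o+m$ you anticipate --- and invokes the approximation theorem \ref{map-approx} so that the infinite-dimensional versions of these two morphisms are applicable.
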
} 
There are similar theorems about (Goresky--Hingston) coproduct and module structures. 

We get these theorems as corollaries from the existence of two kinds of Gysin morphisms (i.e. ''wrong-way maps'') of spectral sequences, which may be of independent interest and lie at the technical heart of this paper. 
{\renewcommand{\thethm}{\ref{ib}}
\begin{thm}[Intersection on the base]\label{ib0} Let $\xi$ be a fiber bundle with a finite-dimensional manifold $B$ as base and fiber $F$ and let $A\subset B$ be a closed submanifold of codimension $d$ with $h_*$--oriented normal bundle. Then there is a morphism $s_B(A)$ of convergent spectral sequences of bidegree $(-d,0)$ between the Serre spectral sequences $\EE(\xi)$ and $\EE(\xi|_A)$ which induces the usual Gysin morphism $H_p(B; \mathfrak{h}_q(F))\to H_{p-d}(A; \mathfrak{h}_q(F))$ on $E^2$. Furthermore, it converges to the Gysin morphism in the homology of the total spaces.\end{thm}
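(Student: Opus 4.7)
The plan is to realise $s_B(A)$ as a Pontryagin--Thom collapse performed compatibly with the $B$-filtrations defining the two Serre spectral sequences. First I would choose a CW structure on $B$ for which $A$ is a subcomplex and for which some closed tubular neighbourhood $\bar N$ of $A$, together with its boundary $\partial \bar N$, forms a pair of subcomplexes. Writing $E_Z := \xi^{-1}(Z)$ for $Z \subset B$, the preimage $E_{\bar N}$ is the disk bundle of the pullback of $\nu_A$ along $\xi|_A : E_A \to A$, so $E_{\bar N}/E_{\partial \bar N}$ is the Thom space of that pullback bundle; the $h_*$-orientation of $\nu_A$ pulls back to a Thom class for it.

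Next I would introduce a relative Serre spectral sequence for the pair $(E, E_{B\setminus A})$, filtered by preimages of skeleta of $B$. It has $E^2$-term $H_p(B, B\setminus A; \mathfrak{h}_q(F))$ and abuts to $h_*(E, E_{B\setminus A})$. The inclusion $(E, \emptyset)\hookrightarrow (E, E_{B\setminus A})$ is filtration-preserving and therefore induces a morphism of spectral sequences from $\EE(\xi)$ to this relative spectral sequence, of bidegree $(0,0)$.

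Now comes the identification step. Excision along $B\setminus N$ gives an isomorphism of the relative spectral sequence with the Serre spectral sequence of the pair $(E_{\bar N}, E_{\partial \bar N})$ over $(\bar N, \partial \bar N)$, and capping with the pulled-back Thom class realises the Thom isomorphism, identifying this in turn with $\EE(\xi|_A)$ shifted by $(-d,0)$. Both identifications lift to the level of Serre spectral sequences because the $B$-filtration is compatible with the chosen subcomplex structures on $A$, $\bar N$ and $\partial \bar N$; on $E^2$ the composite identification is the classical Gysin isomorphism of $A \subset B$ applied coefficientwise in $\mathfrak{h}_q(F)$. Composing gives $s_B(A)$ of bidegree $(-d,0)$, with the stated behaviour on $E^2$ and on the abutment.

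The main technical obstacle will be verifying that the Thom isomorphism lifts to an isomorphism of whole spectral sequences and not merely of $E^2$-pages and abutments. This reduces to producing a filtration-preserving chain-level cap product with the Thom class, which is accomplished by choosing the Thom class cellularly with respect to the adapted CW structure on $B$; naturality of the Serre spectral sequence under the excision homotopy equivalence then handles the rest.
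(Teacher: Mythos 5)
Your strategy --- factoring the Gysin morphism as (map into the relative spectral sequence of $(E,E_{B\setminus A})$) followed by excision and a Thom isomorphism of spectral sequences --- is genuinely different from the paper's. The paper never passes through a relative spectral sequence or a spectral-sequence-level Thom class: it chooses a triangulation of $B$ transverse to $A$ (Lemma \ref{triang}, so that $B^{(p)}\cap A\subset A^{(p-d)}$), represents elements of $h_{p+q}(E^{(p)},E^{(p-1)})$ by Jakob's geometric cycles $[P,a,f]$, homotopes $\pi f$ to be smooth and transverse to $A$ skeleton by skeleton (relative to $\partial P$), and defines the morphism of exact couples directly by $[P,a,f]\mapsto [(\pi f)^{-1}(A),a|,f|]$; the identification with the cellular Gysin map on $E^2$ then falls out of the explicit description of the $E^1$-term. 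Your route is essentially Le Borgne's construction \cite{LBo}, and for singular homology it does work, provided you make precise that the CW structure on $\overline{N}$ is the one induced from a CW structure on $A$ by the disk bundle, so that the relative $p$-cells of $(\overline{N},\partial\overline{N})$ lie over $(p-d)$-cells of $A$ and the Thom isomorphism genuinely shifts the filtration by exactly $d$ (this is the analogue of the paper's transverse triangulation, and is not automatic for an arbitrary ``adapted'' CW structure).

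The genuine gap is in your final step. The theorem is stated for an arbitrary homology theory $h_*$ --- the normal bundle is only assumed $h_*$-oriented --- and for such a theory there is no ``filtration-preserving chain-level cap product with the Thom class'': the Thom class lives in $h^d(E_{\overline{N}},E_{\partial\overline{N}})$ and there are no chains to cap against. This is exactly the obstruction the paper points to when it says that the existing chain-level constructions do not generalize, and it is the reason its proof runs through geometric homology and transversality instead. Your approach can in principle be repaired for general $h_*$ by invoking the cap-product pairing between the cohomological and homological Serre (Atiyah--Hirzebruch) spectral sequences, under which a cohomology class of filtration $\geq d$ sends homological filtration $p$ into filtration $p-d$, together with an identification of the induced map on $E^2$ as cap product with the reduction of the Thom class; but that pairing is a substantial extra piece of machinery that your writeup does not supply. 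As written, the argument establishes the theorem only for ordinary homology.
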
}
{\renewcommand{\thethm}{\ref{if1}}
\begin{thm}[Intersection in the fiber]Let $\xi = (F\to E\to B)$ be a smooth fiber bundle with base a finite-dimensional manifold and fiber a Hilbert manifold, $\xi' = (F'\to E'\xrightarrow{\pi'}B)$ an open subbundle, $\xi_0 = (F_0\to E_0\to B)$ be a subbundle of constant codimension $d$ with $h_*$--oriented normal bundle and $\xi'_0 = (F_0'\to E_0' \to B)$ be the intersection of the two subbundles. Then there is a morphism $s_F(E_0)$ of convergent spectral sequences of bidegree $(0,-d)$ between $\EE(\xi, \xi')$ and $\EE(\xi_0, \xi'_0)$. This induces the usual Gysin morphism $H_p(B; \mathfrak{h}_q(F, F'))\to H_p(B; \mathfrak{h}_{q-d}(F_0, F'_0)))$ on $E^2$. Furthermore, it converges to the Gysin morphism in the homology of the total spaces.\end{thm}}

The proofs of these results use Jakob's bordism-like description of an arbitrary homology theory, which was introduced into string topology by Chataur in \cite{chataur2005bordism}. Both results can be generalized to suitable Hilbert manifolds as base. 

As in every mathematical discipline, it is crucial to compute and understand examples to fill the abstract definitions with life. First, we want to compute the Chas--Sullivan product for a certain class of sphere bundles over spheres by rational homotopy theory. Then, we give explicit manifold generators for the singular homologies of the free loop spaces of spheres and projective spaces and use this informations to do computations in complex and oriented bordism and in Landweber exact theories. We get complete answers in the case of odd-dimensional spheres. At last,  we do a sample computation for the Goresky--Hingston coproduct. \\

Most of the theorems about the behavior of the Serre spectral sequence with respect to the Chas--Sullivan product and other algebraic structures were already shown by other people in the case of singular homology: We already mentioned \cite{cohen2003loop}. Le Borgne (\cite{le2008loop}) has constructed the Gysin morphisms of spectral sequences and applied them to something analogous to \ref{fss}. Kallel and Salvatore (\cite{kallel2003rational}) have proven compatibility of the Serre spectral sequences associated to $\Omega^n M\to L^n M\to M$ with module structures. To the knowledge of the author, all the results about spectral sequences are new for other homology theories. It should be noted that the techniques of the mentioned authors do not generalize since they use chain methods. Furthermore, they do not treat the compatibility with the Goresky--Hingston coproduct. 

Le Borgne has also computed homologies of free loop spaces of sphere bundles in some other cases than in this paper by a different method. \\

The paper is structured as follows:

In section 2, we discuss some preliminaries. First, we recall the definition of a Hilbert manifold and some of their properties. These notions are important for our project since (Sobolev) mapping spaces between manifolds provide examples of Hilbert manifolds. Then we recall the definition of Jakob's geometric homology and give a discussion of Gysin morphisms both in the finite and in the infinite-dimensional case. 

In section 3, we recall the definition of the Chas--Sullivan product and also of module and coproduct structures, which will be the basic objects of the paper. 

In section 4, we construct first the intersection on the base (in the sense of \ref{ib0}) in the finite-dimensional case and use then naturality and approximation by finite-dimensional manifolds to generalize it to the infinite-dimensional case. After constructing also the intersection in the fiber, we prove several statements about the behavior of the Serre spectral sequence with respect to product, coproduct and module structures. 

In section 5, we begin by considering free loop spaces of sphere bundles. By rational homotopy theory, we can prove that the Serre spectral sequence collapses at $E^2$ in many cases. Then we construct very concrete manifold generators of the free loop spaces of spheres and (complex and quaternionic) projective spaces and use these to prove that the Atiyah--Hirzebruch spectral sequence collapses at $E^2$ for these spaces in various homology theories. At the end, we do a sample computation for the Goresky--Hingston coproduct. 
\subsection*{Acknowledgements}
Most of the results in this paper are part of my diploma thesis \cite{meier2009string}, written under the supervision of Matthias Kreck. To him belongs my gratitude for encouraging support and helpful advice. 

\section{Preliminaries}
\subsection{Hilbert manifolds}
A \textit{Hilbert manifold} is a metrizable space which is locally homeomorphic to a separable Hilbert space $\mathbb{E}$. One can define smooth Hilbert manifolds and their tangents spaces in analogy to the finite-dimensional case. We will assume all Hilbert manifolds to be smooth in this paper. To define later Gysin morphisms, we begin with certain transversality results. We do not claim originality here as similar, but deeper, results were already proven by Quinn in \cite{quinn1970transversal}.

\begin{lemma}\label{translem}Let $E\to M$ be a Euclidean smooth Hilbert space bundle over a compact manifold $M$, possibly with boundary. Furthermore, let $L_0, L_1, \dots \subset E$ be a countable collection of sub Hilbert manifolds of finite codimension and $\varepsilon\co M\to\R$ a positive function on $M$. Then there is a smooth section $s\co M\to E$ with $|s(p)|<\varepsilon(p)$ for all $p\in M$ such that $s$ is transverse to all $L_i$. If $A\subset M$ is a closed submanifold and the zero section is already transverse to the $L_i$ on $A$, we can choose $s|_A = 0$. \end{lemma}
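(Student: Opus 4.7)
The plan is a parametrised transversality argument combined with a Baire category step; the finite codimension of each $L_i$ is what makes the relevant evaluation map Fredholm and lets Sard--Smale apply.

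By compactness of $M$, I would first choose finitely many trivialising open sets $U_1,\dots,U_N$ for $E\to M$, a subordinate smooth partition of unity $\{\rho_\alpha\}$, and smooth cutoffs $\chi_\alpha\colon M\to[0,1]$ with $\supp\chi_\alpha\subset U_\alpha$ and $\chi_\alpha\equiv 1$ on $\supp\rho_\alpha$. For $v=(v_1,\dots,v_N)\in\mathbb{E}^N$ define
\[s_v(p)=\sum_{\alpha}\chi_\alpha(p)\,v_\alpha,\]
reading each summand in the respective trivialisation, and consider the smooth map $\Phi\colon\mathbb{E}^N\times M\to E$ given by $(v,p)\mapsto s_v(p)$.

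The central observation is that $\Phi$ is transverse to every $L_i$: at a point $(v,p)\in\Phi^{-1}(L_i)$ pick $\alpha$ with $\rho_\alpha(p)>0$, so $\chi_\alpha(p)=1$; then differentiation of $\Phi$ in the $v_\alpha$-direction is the identity into the fibre $E_p\isom\mathbb{E}$, which surjects onto the finite-dimensional normal fibre of $L_i$ at $\Phi(v,p)$. Hence $\Phi^{-1}(L_i)$ is a sub Hilbert manifold, the projection $\pi_i\colon\Phi^{-1}(L_i)\to\mathbb{E}^N$ is a smooth Fredholm map of index $\dim M-\mathrm{codim}\,L_i$, and the standard implicit-function computation shows that $s_v\pitchfork L_i$ precisely when $v$ is a regular value of $\pi_i$. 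By the Sard--Smale theorem the set $\mathcal{T}_i$ of such regular values is comeager in $\mathbb{E}^N$. Choosing $\delta>0$ (possible by compactness of $M$) so that $\|v\|<\delta$ forces $|s_v(p)|<\varepsilon(p)$ for all $p$, each $\mathcal{T}_i\cap B_\delta$ is comeager in the Baire open ball $B_\delta$, so $\bigcap_i(\mathcal{T}_i\cap B_\delta)$ is nonempty and any $v$ in it gives the desired section.

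For the relative case, replace $s_v$ by $\tilde s_v=\lambda\cdot s_v$, where $\lambda\colon M\to[0,1]$ is smooth, vanishes on $A$, and equals $1$ outside an arbitrarily small neighbourhood of $A$. Shrinking the cover so that each $\chi_\alpha$ is supported off $A$, every $\tilde s_v$ vanishes on $A$ and agrees there with the zero section, which by hypothesis is transverse to the $L_i$. Openness of transversality combined with $C^1$-smallness of $\tilde s_v$ near $A$ then extends this transversality to a neighbourhood of $A$; outside that neighbourhood the previous Baire argument applies unchanged.

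\emph{Main obstacle.} The delicate point is securing Fredholmness of $\pi_i$, which is precisely what the finite codimension hypothesis on the $L_i$ supplies and which gives access to Sard--Smale; without it one would have no analogue of Sard's theorem in the Hilbert setting. Once Fredholmness is in place, the Baire intersection, the size bound via compactness of $M$, and the relative modification through a cutoff against $A$ are all routine.
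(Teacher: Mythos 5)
Your absolute case is correct and is in substance the argument the paper has in mind: the paper's one-line proof invokes Kuiper's theorem to trivialise $E$ globally and then runs exactly this parametric Sard--Smale scheme (with the constant family $s_v=v$), so your partition-of-unity construction of the submersive family $\Phi$ is only a cosmetic variation. The identification of the Fredholm index of $\pi_i$, the residuality of regular values of $\pi_i$, and the Baire intersection over the countably many $L_i$ inside a small ball of parameters are all as they should be.

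The relative case, however, has a genuine gap. You argue that the zero section, being transverse to the $L_i$ at points of $A$, is transverse on a whole neighbourhood of $A$ by ``openness of transversality'', and that $C^1$-smallness of $\tilde s_v$ then propagates this. For a single closed $L_i$ that is fine, but you have countably many: the neighbourhood $W_i$ of $A$ on which the zero section stays transverse to $L_i$ depends on $i$, and $\bigcap_i W_i$ need not contain any neighbourhood of $A$ (already $W_i=[0,1/i)$ around $A=\{0\}$ in $M=[0,1]$ shows this); likewise the $C^1$-thresholds below which transversality to $L_i$ survives perturbation have no positive infimum over $i$. Moreover, by supporting every $\chi_\alpha$ off $A$ you create a collar around $A$ on which $\tilde s_v$ is identically the zero section but where the hypothesis gives you nothing, and on which the family $v\mapsto\tilde s_v$ is no longer submersive, so neither half of your argument covers that region. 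The standard repair avoids openness entirely: take $\lambda\geq 0$ smooth with $\lambda^{-1}(0)=A$ exactly and $d\lambda|_A=0$ (e.g.\ the square of a distance function near $A$, cut off), and keep the original $\chi_\alpha$. Then $D_v(\lambda s_v)=\lambda(p)\,D_v s_v$ is still surjective onto the fibre at every $p\notin A$, while at $p\in A$ the vanishing of $\lambda$ and $d\lambda$ forces the differential of $(v,p)\mapsto\lambda(p)s_v(p)$ to coincide with that of the zero section, which is transverse by hypothesis; hence the whole modified family is transverse to each $L_i$ everywhere, and Sard--Smale plus Baire finish as in the absolute case.
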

\begin{proof}The proof is completely analogous to the finite-dimensional case if one uses the corresponding results in differential topology for Hilbert manifolds and Kuiper's theorem that every Hilbert space bundle is trivial. See for example my diploma thesis \cite[section 2.4.3]{meier2009string}.\end{proof}

\begin{thm}[Relative Transversality Theorem]\label{transrel}Let $\pi\co E\to B$ be a fiber bundle where the fiber and the base are Hilbert manifolds. Furthermore, let $E^0_0, E^1_0, \dots \subset E$ be a countable collection of subbundles which are in every fiber sub Hilbert manifolds of finite codimension. Let $f\co M\to E$ be a smooth map from a compact manifold $M$. Then there is a homotopy $H\co M\times I\to E$ between $f$ and a map $g\co M\to E$ which is transverse to all $E^i_0$ such that $\pi\circ H = \pi\circ f\circ \pr_1$. If $A\subset M$ is a closed submanifold with $f|_A$ transverse to all $E^i_0$, we can choose $H|_{A\times I} = f\circ\pr_1$.\end{thm}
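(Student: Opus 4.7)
The plan is to reduce the problem to the setting of Lemma \ref{translem} by replacing the fibre bundle $\pi \colon E \to B$ by a Hilbert space bundle over $M$, on which we can freely apply the transversality lemma for sections.

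First, I would pull back $\pi$ along $q := \pi \circ f \colon M \to B$ to obtain a fibre bundle $p \colon q^*E \to M$ with Hilbert manifold fibres, together with a canonical map $q^*E \to E$ covering $q$. The map $f$ tautologically lifts to a section $\tilde{f} \colon M \to q^*E$, $x \mapsto (x, f(x))$, and each subbundle $E^i_0$ pulls back to a subbundle $\tilde{E}^i_0 \subset q^*E$ of the same finite fibrewise codimension. A homotopy $H \colon M \times I \to E$ with $\pi \circ H = q \circ \pr_1$ corresponds bijectively to a homotopy $\tilde{H} \colon M \times I \to q^*E$ through sections of $p$, and transversality of the endpoint of $H$ to each $E^i_0$ is equivalent to transversality of the endpoint of $\tilde{H}$ to $\tilde{E}^i_0$.

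Next, I would construct a fibrewise tubular neighbourhood of $\tilde{f}(M) \subset q^*E$. Equipping $p$ with an Ehresmann connection and a fibrewise Riemannian metric (both exist by a partition-of-unity argument over the finite-dimensional base $M$, using finitely many local trivializations around the compact image $\tilde{f}(M)$) and applying the fibrewise exponential map, one obtains a diffeomorphism between an open neighbourhood $U$ of $\tilde{f}(M)$ in $q^*E$ and an open neighbourhood of the zero section of a Hilbert space bundle $\nu \to M$ (namely the vertical normal bundle of $\tilde{f}$). Under this identification, $\tilde{f}$ corresponds to the zero section, nearby sections of $p$ correspond to small sections of $\nu$, and the $\tilde{E}^i_0 \cap U$ become a countable family of subbundles of $\nu$ that are sub Hilbert manifolds of finite codimension in every fibre.

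Now I would apply Lemma \ref{translem} to $\nu \to M$ and this family, choosing a positive $\varepsilon \colon M \to \R$ so small that every section $s$ with $|s(p)| < \varepsilon(p)$ has graph inside the neighbourhood identified with $U$. This yields a section $s$ transverse to all $\tilde{E}^i_0 \cap U$, and since $\tilde{f}|_A$ is already transverse to the $\tilde{E}^i_0$ (equivalently the zero section is transverse on $A$), the lemma allows us to choose $s|_A = 0$. Transporting $s$ back to a section $\tilde{g}$ of $p$ via the tubular neighbourhood and using the straight-line homotopy from $0$ to $s$ in $\nu$ yields a vertical homotopy from $\tilde{f}$ to $\tilde{g}$ which is constant on $A$; pushing it forward to $E$ gives the desired $H$. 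The main obstacle is the fibrewise tubular neighbourhood step: in finite dimensions it is routine, but in the Hilbert manifold setting it requires assembling a connection and a fibrewise Riemannian metric on $p$, and then checking that the resulting exponential chart really identifies the traces of the $\tilde{E}^i_0$ with honest subbundles of $\nu$. Once that is in place, the rest is a direct application of Lemma \ref{translem}.
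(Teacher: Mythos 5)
Your proof follows essentially the same route as the paper's: both pull back along $\pi\circ f$ to regard $f$ as a section of $M\times_B E\to M$, take a tubular neighbourhood of that section identified with a neighbourhood of the zero section of the vertical normal bundle, apply Lemma \ref{translem} to produce a small perturbing section transverse to the pulled-back subbundles, and push the result back to $E$. If anything, your write-up is more careful than the paper's on two points it leaves implicit, namely that the tubular neighbourhood must be fibrewise over $M$ so that the homotopy is vertical (ensuring $\pi\circ H = \pi\circ f\circ\pr_1$), and the treatment of the relative case over $A$.
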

\begin{remark}
The theorem generalizes immediately to the case where the base is not a Hilbert manifold but a differentiable space in the sense of Sikorski; see the discussion before 2.31 in \cite{meier2009string}. Note also that for $B = pt$, we get simply the usual (absolute) transversality theorem. 
\end{remark}
\begin{proof} Consider the closed embedding $\id\times f\co M \to M\times_B E$, where the pullback is over $\pi\circ f\co M\to B$. We can construct a tubular \nbhd $T$ of $M$ in $M\times_B E$ and identify it with a neighbourhood in the normal bundle. Now choose a section $s\co M\to T$ transverse to all $M\times_B E^i_0$. We can identify the tangent bundle of $M\times_B E$ with $\pr_1^*TM\oplus \pr_2^*T_vE$ where $T_vE$ denotes the vertical part of $TE$. Since $\pi|_{E^i_0}\co E^i_0\to B$ is a submersion for all $i$, we have that $s\co M\to M\times_B E \to M\times E$ is transverse to all $M\times E^i_0$. Hence, we get that $g:= \pr_2\circ s$ ist transverse to all $E^i_0$.\end{proof}

If one uses that every (infinite-dimensional) Hilbert manifold is diffeomorphic to an open subset of the standard Hilbert space, smooth approximation can also be proven just as in the finite-dimensional case. 

An important example for a Hilbert manifold is the space $H^n(M,N)$ of Sobolev maps between a compact manifold $M$ of dimension $n$ and an arbitrary manifold $N$, which is homotopy equivalent to $Map(M,N)$ with the usual compact-open topology. This is surely well known for a long time, but the author was unable to find a complete proof in the literature. A proof can be found in the companion paper \cite{meier2009hilbert}, where also a precise definition of $H^n(M,N)$ and further references are given. In the following, we will write $Map(M,N)$ for $H^n(M,N)$, which will be no source of confusion since at the end we are only interested in the homotopy type.

A useful fact about these mapping spaces is the following approximation theorem:
\begin{thm}\label{map-approx}Let $M,N$ be manifolds and assume $M$ to be compact. Then there exists a sequence of submanifolds $P_1 \subset P_2 \subset \cdots \subset Map(M,N)$ such that one can deform every map $X\to Map(M,N)$ from a compact $X$ to a map into one of the $P_i$. \end{thm}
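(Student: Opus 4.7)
The plan is to reduce the problem to an approximation problem in a Hilbert space via a Whitney embedding of $N$. First I would embed $N$ as a closed submanifold of some $\R^k$, choose a smooth tubular neighbourhood $U$ of $N$ in $\R^k$ together with the associated smooth retraction $r\colon U\to N$, and observe that $H^n(M,\R^k)$ is a separable Hilbert space, that $W:=\{f\in H^n(M,\R^k):f(M)\subset U\}$ is open (by Sobolev embedding, elements of $H^n(M,\R^k)$ are continuous in the range under consideration), and that post-composition with $r$ defines a smooth retraction $r_*\colon W\to H^n(M,N)$ of an open neighbourhood of $H^n(M,N)\subset H^n(M,\R^k)$ onto $H^n(M,N)$.

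Next, I would fix a nested sequence $V_1\subset V_2\subset\cdots$ of finite-dimensional subspaces of $H^n(M,\R^k)$ consisting of smooth functions and whose union is dense in the full Hilbert space; this is possible by separability and density of $C^\infty(M,\R^k)$. Each $Q_i:=V_i\cap W$ is an open subset of $V_i$, hence a finite-dimensional submanifold of $H^n(M,\R^k)$, and I would set $P_i:=r_*(Q_i)\subset H^n(M,N)$, which is an increasing sequence by construction. After a slight perturbation of the $V_i$, an application of Lemma \ref{translem} ensures that $r_*|_{Q_i}$ is an injective immersion, so that each $P_i$ is a genuine finite-dimensional submanifold of $H^n(M,N)$.

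For the deformation statement, let $\varphi\colon X\to H^n(M,N)$ be a map from a compact space $X$. The image $\varphi(X)\subset H^n(M,\R^k)$ is compact, so density of $\bigcup V_i$ implies that the orthogonal projections $\pi_{V_i}\circ\varphi$ converge to $\varphi$ uniformly in $x\in X$. For $i$ sufficiently large, not only does $\pi_{V_i}\circ\varphi$ take values in $W$, but the straight-line homotopy $t\varphi+(1-t)\pi_{V_i}\circ\varphi$ stays inside $W$ as well. Composing this homotopy with the smooth retraction $r_*$ then produces a deformation from $\varphi=r_*\circ\varphi$ to the map $r_*\circ\pi_{V_i}\circ\varphi$, whose image lies in $P_i$.

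The main obstacle is endowing $P_i$ with a genuine submanifold structure rather than only a subset structure: injectivity and immersivity of $r_*|_{Q_i}$ are not automatic, but they are generic properties of finite-dimensional affine subspaces of $H^n(M,\R^k)$ and can be secured by a transversality argument based on Lemma \ref{translem}, applied to the critical locus of $r_*$ and the diagonal inside $Q_i\times Q_i$. The remaining ingredients (density of smooth Hilbert subspaces, uniform approximation on compact sets, smoothness of $r_*$) are standard Hilbert-manifold analysis.
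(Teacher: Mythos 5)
Your overall strategy (Whitney embedding of $N$, tubular retraction $r$, finite-dimensional linear subspaces $V_i\subset H^n(M,\R^k)$, push down by $r_*$) is genuinely different from the paper's: the paper does not prove the theorem in-text but refers to a broken-geodesics-style finite-dimensional approximation in the spirit of Milnor's \S 16, carried out in \cite{Mei1}. In that approach the approximating spaces are by construction open subsets of finite products of copies of $N$ (parametrized by values at the break points/vertices), so they are embedded submanifolds for free. The deformation half of your argument is sound: Sobolev embedding ($n>\dim M/2$) gives $C^0$-control, compactness of $\varphi(X)$ gives uniform approximation by $\pi_{V_i}\circ\varphi$, the straight-line homotopy stays in $W$, and composing with $r_*$ lands in $P_i$.

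The genuine gap is exactly at the point you flag as ``the main obstacle'': making $P_i=r_*(V_i\cap W)$ a submanifold. Lemma \ref{translem} cannot be used to secure injectivity and immersivity of $r_*|_{Q_i}$. That lemma perturbs sections over a \emph{compact} base to be transverse to \emph{finite-codimension} sub-Hilbert-manifolds, whereas here $Q_i=V_i\cap W$ is non-compact, and the sets you need to avoid --- the locus where $V_i$ meets the kernel of $dr_*$ (which is the infinite-dimensional, infinite-codimensional vertical tangent space $H^n(M,f^*\nu_N)$), and the preimage of the diagonal of $H^n(M,N)\times H^n(M,N)$ under $r_*\times r_*$ --- have infinite codimension; ``transversality'' to them means avoidance, which is a different genericity statement requiring its own parametric Sard-type argument, not supplied. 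More seriously, even if $r_*|_{Q_i}$ were an injective immersion, that does not make its image an embedded submanifold: $Q_i$ is a non-compact open subset of $V_i$ whose closure meets $\partial W$, so a sequence leaving every compact subset of $Q_i$ can have $r_*$-images converging inside $H^n(M,N)$, and then $P_i$ fails to be locally closed. You would need $r_*|_{Q_i}$ to be proper onto its image (a homeomorphism onto its image), and nothing in the construction provides this. Finally, any perturbation of the $V_i$ must be performed coherently so as not to destroy the nesting $V_1\subset V_2\subset\cdots$, on which the inclusions $P_i\subset P_{i+1}$ (as submanifolds, which is what the later transversality arguments use) depend. Until these points are addressed, the $P_i$ are only images of maps, not the submanifolds the theorem asserts; the Milnor-style model avoids all three problems at once, which is why the paper takes that route.
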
 
This is a generalization of the corresponding well-known theorem for the loop space. A proof along the lines of Milnor's \cite[§16]{milnor1963morse} can be found in \cite[section 2.6.2]{meier2009string}.

\subsection{Geometric homology}\label{gh}
In this section, we want recall a bordism description for homology due to Martin Jakob (\cite{jakob2000alternative}), which works for every (generalized) homology theory. It can be thought as a geometric way to build out of a cohomology theory the corresponding homology theory.

\begin{defi}[Geometric cycles] Let $h^*$ be a cohomology theory and $(X,A)$ a pair of topological spaces. A \textit{geometric cycle} is a triple $(P, a, f)$ where $f \co P \to X$ is a continuous map from a compact connected $h^*$--oriented manifold $P$ with boundary to $X$ such that $f(\partial P)\subset A$ and $a \in h^*(P)$. 

If $P$ is of dimension $p$ and $a \in h^m(P)$, then $(P, a, f)$ is a geometric cycle of \textit{degree} $p - m$.\end{defi}

We want to consider two relations on the class of geometric cycles: 
\begin{enumerate}
\item (Bordism relation) We call two triples $(P,a,f)$ and $(P',a',f')$ bordant if there is a geometric cycle $(W, b, g)$ such that $P\coprod (-P')\subset \partial W$ is a regularly embedded submanifold of codimension 0 which inherits the $h^*$--orientation of $W$. We require further that $b|_P = a, b|_{P'} = a'$,\, $g|_P = f$,\, $g|_{P'} = f'$ and $g(\partial W - P\coprod P')\subset A$. Two bordant cycles are defined to be equivalent.
\item (Vector bundle modification) Let $(P, a, f)$ be a geometric cycle and
consider a smooth $h^*$--oriented $d$--dimensional vector bundle $\pi\co E\to P$, take the unit sphere bundle $S(E \oplus 1)$ of the Whitney sum of $E$ with a copy of the trivial line bundle
over $P$. The bundle $S(E\oplus 1)$ admits a section s. By $s_!\co h^*(P)\to h^{*+d}(S(E\oplus 1))$ we denote the Gysin morphism in cohomology associated to this section\footnote{This can, for example, be defined via Poincare duality. For other possibilites in the analogous case of homology, see the next section.}. We impose that
\[(P, a, f) \sim (S(E \oplus 1), s_!(a), fp).\]\end{enumerate}

We lay upon the group of cycles the equivalence relations generated by the relations 1 and 2. An equivalence class of geometric cycles is called a \textit{geometric class} and will be denoted by $[P, a, f]$. We define $gh_q(X,A)$ to be the abelian group of geometric classes of degree $q$, where addition is defined via disjoint union. 
\begin{thm}[\cite{jakob2000alternative}, Corollary 4.3] There is a natural isomorphism
\begin{eqnarray*}gh_q(X,A) \to h_q(X,A)\end{eqnarray*}
defined by
\begin{eqnarray*}[P, a, f] \mapsto f_*(a \cap [P]),\end{eqnarray*}
where $[P]$ is the fundamental class of $(P,\partial P)$ and $h_*$ is the homology theory corresponding to the spectrum representing $h^*$.\end{thm}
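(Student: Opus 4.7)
The plan is to follow Jakob's original strategy. Define $\phi\colon gh_q(X,A)\to h_q(X,A)$ on cycles by $\phi(P,a,f) = f_*(a\cap[P])$, where $[P]\in h_*(P,\partial P)$ is the fundamental class determined by the $h^*$-orientation; then verify that $\phi$ descends to a natural transformation of generalized homology theories on CW pairs, and finally show that it is an isomorphism.

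The first task is well-definedness. Bilinearity of the cap product handles the relation $(P,\lambda a+\mu b,f)=\lambda(P,a,f)+\mu(P,b,f)$ automatically. For the bordism relation, suppose $(W,b,g)$ witnesses a bordism between $(P,a,f)$ and $(P',a',f')$, so $\partial W = P\sqcup(-P')\cup W_1$ with $g(W_1)\subset A$. Naturality of the cap product together with the formula $\partial(b\cap[W,\partial W]) = \pm\, b|_{\partial W}\cap[\partial W]$ and the additivity of the fundamental class on $\partial W$ then gives
\begin{equation*}
f_*(a\cap[P]) - f'_*(a'\cap[P']) \;=\; \pm\, g_*\partial\bigl(b\cap[W,\partial W]\bigr) \;\in\; \im\bigl(h_*(A)\to h_*(X)\bigr),
\end{equation*}
which vanishes in $h_*(X,A)$. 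For vector bundle modification the key identity is $\pi_*\bigl(s_!(a)\cap[S(E\oplus 1)]\bigr) = a\cap[P]$; this is a direct consequence of the projection formula for the cohomology Gysin map $s_!$ combined with $\pi\circ s=\id_P$, so pushing forward by $f$ resp.\ $f\pi$ yields the same class in $h_*(X,A)$. Naturality in the pair $(X,A)$ is evident from the formula.

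To upgrade $\phi$ to an isomorphism, I would verify that $gh_*$ itself is a generalized homology theory and then invoke the standard comparison principle. Homotopy invariance and additivity (the latter using compactness of representatives) are immediate. The boundary operator $\partial[P,a,f] := [\partial P, a|_{\partial P}, f|_{\partial P}]$ makes the long exact sequence of the pair exact; excision is proved by applying the relative transversality Theorem~\ref{transrel} to deform representatives off the excised subset. Finally, on $X=\mathrm{pt}$ every geometric class can be reduced, by iterated vector-bundle modification with trivial bundles (turning $(P,a)$ into $(S^k\times P,\tilde a)$ and dimension-shifting $a$), to one of the form $(\mathrm{pt},a)$ with $a\in h^{-q}(\mathrm{pt})$, on which $\phi$ acts as the identity. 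Hence $\phi$ is an isomorphism on coefficients, and the comparison principle upgrades this to an isomorphism of homology theories on CW pairs.

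The main obstacle is exactness of the long exact sequence of a pair for $gh_*$: given a cycle $(P,a,f)$ whose class vanishes in $gh_*(X,A)$, one must produce, after a vector-bundle modification, a bordism in $A$ that realizes it as a boundary. This is the one place where genuine geometric surgery (rather than formal manipulation) is required, and it is essentially where the transversality results of Section~2 earn their keep.
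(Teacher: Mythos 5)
The paper itself offers no proof of this statement: it is imported wholesale from Jakob [Jak, Corollary 4.3] and used as a black box, so the only thing to compare your sketch against is Jakob's own argument. Your overall architecture (well-definedness of $[P,a,f]\mapsto f_*(a\cap[P])$, verification that $gh_*$ is a homology theory, comparison on coefficients) is the same as his, and the well-definedness part is essentially sound: bilinearity is immediate, the bordism relation does follow from the boundary formula for the cap product (though $P\sqcup(-P')$ and the rest of $\partial W$ are glued along their boundaries, so identifying the relevant summand of $h_*(\partial W, W_1)$ with $h_*(P,\partial P)\oplus h_*(P',\partial P')$ needs an excision step you elide), and the identity $\pi_*\bigl(s_!(a)\cap[S(E\oplus 1)]\bigr)=a\cap[P]$ is exactly the right computation for vector bundle modification.

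The genuine gap is in the coefficient computation. Vector bundle modification by a trivial bundle $\varepsilon^k$ replaces $(P,a)$ by $(P\times S^k,\tilde a)$: it always \emph{enlarges} the manifold while raising the degree of $a$ in step, so no iteration of it can turn a positive-dimensional cycle into one of the form $(\mathrm{pt},a)$. Surjectivity of $\phi$ on the point is in any case trivial (take $P=\mathrm{pt}$ and $a\in h^{-q}(\mathrm{pt})$, since capping with $[\mathrm{pt}]$ is the canonical isomorphism $h^{-q}(\mathrm{pt})\cong h_q(\mathrm{pt})$); the actual content is \emph{injectivity}: given $(P,a)$ with $p_*(a\cap[P])=0$, one must exhibit $(P,a)$ as equivalent to zero. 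Jakob's route embeds $P$ into a sphere $S^N$, performs vector bundle modification by the \emph{normal bundle} of that embedding (orientable because $TP$ and $TS^N$ are), and uses duality in $S^N$ to reduce to the known value of $\tilde h_*(S^N)$; nothing playing this role appears in your sketch. Together with the unproven exactness of the long exact sequence of a pair (which you rightly single out as the other hard point, but do not supply), the two steps carrying the real content of the theorem are both missing, so the proposal is an accurate outline of the strategy rather than a proof.
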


We will identify $gh_*$ and $h_*$ via this isomorphism in the rest of this paper. For later applications, we give an explicit description of the excision isomorphism: Let $[P,a,f]\in h_*(X,A)$ be a geometric class and $B\subset A$ such that $\overline{B}\subset\mathring{A}$. The preimages $f^{-1}(\overline{B})$ and $f^{-1}(X-\mathring{A})$ are closed and we can choose a smooth Urysohn function $g\co P\to \R$ separating them. Choose a regular value $x$ between 0 and 1. Then $Q := g^{-1}([0,x])$ is a manifold with boundary in $A - B$. The restriction $[Q, a|_Q, f|_Q]$ is the image of the excision isomorphism in $h_*(X-B, A-B)$. Indeed, $(P\times [0,1], \pr_1^*(a), f\circ\pr_1)$ is a bordism between $[P,a,f]$ and $i_*[Q, a|_Q, f|_Q]$ since $Q\coprod P$ is a regular submanifold of codimension 0 in $P\coprod P$.\\

\subsection{Gysin morphisms}
For the definition of the Chas--Sullivan product, the construction of Gysin morphisms (also called \textit{umkehr maps} in the literature) is crucial. Let $\iota\co A\hookrightarrow B$ be the inclusion of a sub Hilbert manifold of finite codimension $d$ with $h_*$--oriented normal bundle. We associate to this data a Gysin morphism, i.e.  a ''wrong-way'' map $\iota^!\co h_*(B) \to h_{*-d}(A)$. We will give two constructions in the general case and a third one in the finite-dimensional case.

The first uses the theory of geometric homology\footnote{We follow here (up to sign) Chataur \cite{chataur2005bordism}.}: Let $[P,a,f]$ be a geometric cycle in $h_p(B)$. By the transversality theorem, we can assume that $f$ is transversal to $A$. Now, we define $\iota^!_G([P,a, f]) := [\widetilde{P}, a|_{\widetilde{P}}, f|_{\widetilde{P}}] \in h_{p-d}(A)$, where $\widetilde{P} := f^{-1}(A)$. It is easy to see that this class is well-defined. This is the description in the infinite-dimensional case we will primarly use in this paper. 

Another possible construction uses the Thom isomorphism: Let $\nu\co N \to A$ be a tubular neighborhood of $A$ and $u\in h^d(N, N-A)$ the Thom class. Then the composition 
\[h_*(B) \to h_*(B,B-A)\cong h_*(N, N-A) \xrightarrow{\cap u} h_{*-d}(N) \cong h_{*-d}(A)\]
is an alternative way to define Gysin maps. It coincides with the construction above in the finite-dimensional case and also for mapping spaces between manifolds by the approximation theorem \ref{map-approx}. Therefore, our definition of the Chas--Sullivan product will agree with that of Cohen and Jones \cite{cohen2002homotopy} (which can also be found in Cohen and Voronov \cite{cohen2005notes}) as also shown by Chataur in a different way. Note that one can substitute for $h_*$ here also ordinary homology with \textit{local} coefficients. 

In the finite-dimensional case, there is also a cellular construction of Gysin maps. To make this precise, we need the following (simple) lemma, which is proven in the author's diploma thesis \cite[section 2.5]{meier2009string}:

\begin{lemma}\label{triang}Let $A\subset B$ be a closed submanifold of a finite-dimensional manifold. Then one can triangulate $B$ transversal to $A$ in the sense that every stratum ist transverse to $A$. Furthermore, one can triangulate every $A\cap \Delta_i$ for the simplices $\Delta_i$ of $\mathcal{T}$ in a way such that the triangulations coincide in the intersections $A\cap \Delta_i \cap \Delta_j$. Thus one obtains a induced triangulation of $A$.\end{lemma}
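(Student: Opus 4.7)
The plan is to begin with an arbitrary smooth triangulation of $B$, perturb it so that every simplex becomes transverse to $A$, and then inductively construct compatible triangulations of the intersections $A\cap\Delta_i$.

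First, I would invoke the classical Whitehead--Cairns theorem to obtain a smooth triangulation $h_0\colon |K_0|\to B$, namely a homeomorphism from the realization of a locally finite simplicial complex that restricts to a smooth embedding on each closed simplex. To then enforce the transversality condition, I would perturb $h_0$ to a nearby smooth map $h$, working skeleton by skeleton. Assume inductively that $h$ has already been perturbed so that every simplex of dimension less than $k$ is transverse to $A$; for each $k$-simplex $\Delta_i$ the finite-dimensional relative transversality theorem (which is the $\dim B<\infty$ case of \ref{transrel}) yields a $C^1$-small perturbation of $h|_{\Delta_i}$ supported in the interior of $\Delta_i$ and making it transverse to $A$. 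Taking the perturbations small enough that $h$ remains a homeomorphism and a smooth embedding on each closed simplex, we obtain a new smooth triangulation $\mathcal{T}$ every simplex of which is transverse to $A$. By transversality each non-empty intersection $A\cap\Delta_i$ is then a compact smooth manifold with corners of dimension $\dim A+\dim\Delta_i-\dim B$, whose boundary is exactly $A\cap\partial\Delta_i$.

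Next, I would triangulate the intersections $A\cap\Delta_i$ by induction on $\dim\Delta_i$. For $0$-simplices, $A\cap\Delta_i$ is empty or a single point and is triangulated trivially. Suppose that for all simplices $\Delta_j$ of dimension less than $k$, smooth triangulations of $A\cap\Delta_j$ have been chosen which agree on every face-intersection $A\cap\Delta_j\cap\Delta_{j'}$. For a $k$-simplex $\Delta_i$, the triangulations on its proper faces $\Delta_j\subset\partial\Delta_i$ assemble to a smooth triangulation of the full boundary $A\cap\partial\Delta_i$ of the compact manifold-with-corners $A\cap\Delta_i$. Extending this boundary triangulation to all of $A\cap\Delta_i$ via the relative form of the smooth triangulation theorem for compact manifolds with corners (as in Munkres) completes the induction and yields the desired triangulation of $A$ coherent with $\mathcal{T}$.

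The main obstacle I expect to be this last extension step, which relies on a nontrivial relative triangulation theorem for smooth manifolds with corners, together with the book-keeping to ensure that the boundary data presented at stage $k$ is already a legitimate triangulation of $A\cap\partial\Delta_i$. By contrast, the perturbation step, although requiring attention to keep the simplicial structure intact across skeleta, is otherwise a standard general-position argument in finite dimensions.
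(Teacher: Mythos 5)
The paper does not actually prove Lemma \ref{triang}: it is labelled ``simple'' and its proof is deferred to \cite[section 2.5]{Mei1}, so there is no argument in the text to compare yours against directly. Your overall strategy --- obtain a smooth triangulation via Whitehead--Cairns, arrange that every stratum is transverse to $A$, observe that each $A\cap\Delta_i$ is then a compact manifold with corners with boundary $A\cap\partial\Delta_i$, and triangulate these pieces by induction over the skeleta using a relative triangulation theorem --- is the natural one and is surely close to what the cited reference does. You also correctly identify the relative extension step for manifolds with corners as the place where the real (if standard) content sits.

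There is, however, one step that does not work as written: the claim that perturbing $h_0$ independently on each $k$-simplex, rel its boundary, yields a new \emph{triangulation} provided the perturbations are ``small enough.'' Smallness does not by itself protect injectivity of the global map: the images of two adjacent simplices come arbitrarily close to one another near their shared face, and for simplices of dimension $k<\dim B$ the images may even be mutually tangent along a common face, so a $C^1$-small perturbation rel boundary of one of them can push its interior across the other. You give no estimate ruling this out, and none is immediate. The standard way to sidestep this entirely is to leave the triangulation alone and move $A$ instead: transversality to the countably many strata of a fixed locally finite smooth triangulation $\mathcal{T}_0$ is a residual condition on ambient diffeomorphisms, so one finds $\phi$ close to the identity with $\phi(A)$ transverse to every stratum of $\mathcal{T}_0$, and then $\mathcal{T}:=\phi^{-1}(\mathcal{T}_0)$ is a smooth triangulation of $B$ every stratum of which is transverse to $A$. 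With that repair the remainder of your argument goes through.
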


Let $C_*(A)$ and $C_*(B)$ denote the cellular chain complexes. By sending a simplex $\Delta$ of the triangulation of $B$ to the sum of all the simplices of $\Delta\cap A$, we get a chain map $s\co C_*(B)\to C_{*-d}(A)$ which induces a Gysin map on homology. 

This cellular description of the Gysin morphism is also suitable to describe Gysin morphisms for homology with local coefficients: Let $\mathcal{G}$ be a local system and $x = x_{\Delta}$ be the midpoint of a simplex $\Delta$. In the cellular complex with respect to $\mathcal{G}$ the coefficient of $\Delta$ lies in $\mathcal{G}_x$. Choose arbitrary paths from $x$ to the midpoints of the simplices of $\Delta\cap A$ in $\Delta$ and map via them the coefficient of $\Delta$ to coefficients for these simplices (note that all possible choices of these paths are homotopic). This describes a Gysin map $g^!\co H_*(B; \mathcal{G}) \to H_{*-d}(A; \mathcal{G}|_A)$. 

We want to show the equivalence of the cellular construction with the construction via the Thom isomorphism in the case of singular homology. We can form the sub chain complex $T_*(N, N-A)$ of all singular chains $S_*(N, N-A)$ which are transverse to $A$ (i.e. transverse in each stratum). Given a cycle in $S_*(N,N-A)$, one can form an associated simplicial complex $K$ with a map $f\co K\to B$ by glueing the simplices. By a variant of the transversality theorem (see e.g., \cite{meier2009string}, 2.29 and 2.12), one can homotope $f$ to a map $g$ transverse to $A$ without moving $\partial K$ into $A$. Then $(K,g)$ defines a cycle in $T_*(N,N-A)$ which is homologous to $(K,f)$. A similar argument can be applied for boundaries. Therefore, $T_*(N,N-A)$ is quasi-isomorphic to $S_*(N,N-A)$. By the universal coefficient theorem, the dual chain complexes are also quasi-isomorphic. Therefore, it is enough to define the Thom class $u\in H^d(N,N-A)$ on $d$-simplices transverse to $A$. 

For a transverse $d$--simplex $(\Delta, f)$, we define $u(\Delta)$ as the oriented intersection number $\Delta\cap A$. This is a cocycle, since for every transverse $(d+1)$--simplex $(\Delta', f)$ the intersection $\Delta'\cap A$ is represented by a compact $1$--manifold with boundary and the number of oriented boundary points of such a manifold ist $0$. If we restrict $u$ to $(\nu^{-1}(x), \nu^{-1}(x) - \{x\})$ for an $x\in A$, we get the orientation class since $u$ sends the generator of the homology to $1$. Therefore, $u$ represents the Thom class.

The cellular Gysin map is surely unchanged under subdivisions of the triangulation. Therefore, we can assume that every $d$--simplex intersects $A$ in at most one point. Furthermore, we can assume via an isotopy of the triangulation that locally at $A$ every $d$--simplex is only in one fiber. Therefore, we can assume via a suitable subdivision that the $p$--backface of every $(p+d)$--dimensional simplex $\Delta$ intersecting $A$ maps via the projection map homeomorphically onto $\Delta\cap A$. 

A simplex in our triangulation determines a singular simplex with the induced orientation. It is now clear that it is the same if we cap this simplex with $u$ and project it down to $A$ or if we apply the cellular Gysin. This argument clearly works also with (local) coefficients and proves our claim. This shows in particular that the cellular Gysin does not depend on the chosen triangulation at the level of homology.\\

One can also define Gysin morphisms in a relative setting: Let $B'\subset B$ be a submanifold, transverse to $A$. Let furthermore $[P,a,f]\in h_n(B, B')$ be a geometric cycle. Then we can make first $f|_{\partial P}$ transverse to $A\cap B'$ in $B'$ and extend the homotopy to whole $P$ to get a map $f'\co P \to B$. Since this is already transverse to $A$ on $\partial P$, we can homotope $f'$ to a map $g\co P \to B$ which is transverse to $A$ such that the homotopy stays constant on $\partial P$. We define $\iota_![P,a,f] = [\widetilde{P}, a|_{\widetilde{P}}, g|_{\widetilde{P}}]\in h_{n-d}(A, B'\cap A)$, where $\widetilde{P} := g^{-1}(A)$.  

It is also possible to use the Thom map:
\[\xymatrix{h_*(B, B') \ar[d] \\ h_*(B,B'\cup (B-A))\cong h_*(N, \nu^{-1}(A\cap B')\cup (N-A)) \ar[d]^{\cap u}\\ h_{*-d}(N, \nu^{-1}(A\cap B')) \cong h_{*-d}(A, A\cap B')}\]
Here $N$ denotes again a tubular neighborhood of $A$ in $B$. 

\section{The Chas--Sullivan product}\label{cs}
In this section, we want to recall the definition of the Chas--Sullivan product. We will follow the approach by David Chataur exhibited in \cite{chataur2005bordism}. We will fix the notation $L^n M = Map(S^n,M)$ for the unpointed and $\Omega^n M= Map^*(S^n,M)$ for the pointed maps. 

Let $M$ be an $h_*$--oriented manifold of dimension $d$ for a homology theory $h_*$. Consider the diagram

\[ \xymatrix {L^n M\times_M L^n M \ar[d]_{\ev} \ar[r]^\iota & L^n M\times
L^n M\ar[d]^{\ev\times \ev}\\
         M \ar[r]^{\Delta} & M\times M }\]
         
Here $\Delta$ stands for the diagonal, $\iota$ is the inclusion and $\ev$ the evaluation at the base point $pt$ of $S^n$. Since $\ev$ is a submersion, $L^n M\times_M L^n M$ is a sub Hilbert manifold of $L^n M \times L^n M$ and the normal bundle of $\mm$ in $L^n M \times L^n M$ is the pullback of the normal bundle of $M$ in $M\times M$. 

We have a map \begin{eqnarray*}\gamma\co L^n M\times_M L^n M = Map(S^n\vee S^n, M) \to Map(S^n, M) = L^n M\end{eqnarray*} induced by the collapse map $c\co S^n\to S^n\vee S^n$. Note that one has to be careful how to identify the wedge summands with the standard sphere. One possible convention is apparent in the following coordinate description of the collapse map:

\[(x_0,\dots, x_n)\mapsto 
\begin{cases}(2x_0-1, \frac{\sqrt{1-(2x_0-1)^2}}{\sqrt{1-x_0^2}}x_1,\frac{\sqrt{1-(2x_0-1)^2}}{\sqrt{1-x_0^2}}x_2,\dots, \frac{\sqrt{1-(2x_0-1)^2}}{\sqrt{1-x_0^2}}x_n)_1 \text{ for } x_0\geq 0\\
(-2x_0-1, -\frac{\sqrt{1-(2x_0+1)^2}}{\sqrt{1-x_0^2}}x_1, \frac{\sqrt{1-(2x_0+1)^2}}{\sqrt{1-x_0^2}}x_2, \dots,\frac{\sqrt{1-(2x_0+1)^2}}{\sqrt{1-x_0^2}}x_n)_2 \text{ for } x_0\leq 0\end{cases}
\]

The \textit{Chas--Sullivan product} is now defined as the composition

\[ \xymatrix {h_p(L^n M)\tensor h_q(L^n M)\ar[d]^{\times} & h_{p+q-d}(L^n M)\\
h_{p+q}(L^n M \times L^n M) \ar[r]^-{\iota^!} & h_{p+q-d}(L^n M\times_M L^n M) \ar[u]^{\gamma_*} & }\]

For notational convenience, we define $\h_*(L^n M) = h_{*+d}(L^n M)$. Note that, if $h=H$ is ordinary homology, one usually chooses the notation $\mathbb{H}_*(L^n M)$ for $\h_*(L^n M)$. The above composition may now be written as:
\begin{eqnarray*}\mu\co \h_p(L^n M)\tensor \h_q(L^n M) \to \h_{p+q}(L^n M)\end{eqnarray*}

\begin{warning}One has to be careful with signs here since there are different conventions in the literature. The sign convention where the Chas--Sullivan product is graded commutative is used, for example, in the original Chas and Sullivan article \cite{chas1999string} and also in \cite{chataur2005bordism} (where it is ensured by an ''artificial'' sign), while our sign convention agrees, for example, with that in \cite{cohen2005notes} (Theorem 1.2.1 seems to have the wrong sign as stated there): interchanging a factor of degree $p$ and a factor of degree $q$ induces a factor of $(-1)^{pq+d}$.\end{warning}

\begin{remark}As an aside, we remark that it is also possible to give a completely finite-dimensional bordism-like description of the Chas-Sullivan product in the case of ordinary homology via Kreck's theory of stratifolds. Stratifolds are smooth spaces which are stratified by smooth manifolds satisfying certain conditions on the topology and the relationship of the strata. It is possible to carry out much of the usual differential topology in this setting. Especially we can construct a stratifold bordism homology theory, which coincides with singular homology in the case of spaces having the homotopy type of CW-complexes (for more details and precise definitions see Kreck \cite{kreck-differential}). Let now $[S_1,f_1],[S_2,f_2]\in H_*(LM)$ be homology classes. We can interpret a map $f\co S\to LM$ as a map $\ev\circ f\co S\to M$ with a loop in $\Omega(\ev(f(p)))$ attached to each $p\in S$. Intersect $\ev\circ f_1$ and $\ev\circ f_2$ transversally to get a map $F\co S_1\times_M S_2\to M$ and attach to each $p\in S_1\times_M S_2$ the composition of the loops attached to $\pr_1(p)$ and $\pr_2(p)$. This is a representative of the Chas--Sullivan product of $[S_1,f_1]$ and $[S_2,f_2]$ as is shown in the author's diploma thesis \cite[section 3.2.1]{meier2009string}. This is close in spirit to the original definition in Chas and Sullivan \cite{chas1999string}.\end{remark}

A closer look at the definition of the Chas--Sullivan product reveals that the only thing we have used of $S^n$ is that it is both a manifold and an H-cogroup (via the map $S^n \to S^n \vee S^n$). Since every $n$--dimensional manifold $N$ has the structure of an H-comodule over $S^n$ via the map $c\co N \to N\vee S^n$, collapsing the boundary of a little disk, we get (for $M$ $h_*$--oriented) the following module structure:
\[ \xymatrix {h_p(Map(N,M))\tensor h_q(L^n M)\ar[d]^{\times} & h_{p+q-d}(Map(N,M))\\
h_{p+q}(Map(N, M) \times L^n M) \ar[r]^-{\iota^!} & h_{p+q-d}(Map(N,M)\times_M L^n M) \ar[u]^{\gamma_*} }\]

Here $\iota\co Map(N,M)\times_M L^n M \to Map(N,M)\times L^n M$ is the inclusion and \[\gamma\co Map(N,M)\times_M L^n M \to Map(N,M)\] is the map induced by $c$. This defines a $\textbf{h}_*(L^n S)$--module structure on $h_*(Map(N,M))$, which was first considered by Kallel and Salvatore in  \cite{kallel2003rational}. This structure is independent of the chosen disk since all embeddings of a disk are isotopic. Note that we could substitute $h_p(Map(N,M))$ by $h'_p(Map(N,M))$, where $h'$ is a module homology theory over $h$.

Besides the module structure, there is also the structure of a coalgebra on $\textbf{h}_*(LM)$ if $h_*$ is a graded field (e.g., for ordinary homology with field coefficients or Morava K-theory as $h$). To define the coproduct, let $i\co LM \times_M LM \to LM$ be the inclusion of all loops $\alpha\co [0,1] \to M$ with $\alpha(0) = \alpha(\frac12) = \alpha(1)$ and $\iota\co LM \times_M LM\to LM\times LM$ the usual inclusion. Then we get a map
\[ \xymatrix {h_n(LM) \ar[r]^-{i^!} &h_{n-d}(LM \times_M LM) \ar[r]^-{\iota_*} &h_{n-d}(LM\times LM) \cong (h_*(LM)\tensor h_*(LM))_{n-d}. }\]

\label{GH} This coincides as a special case with the TQFT-construction of Cohen and Godin (\cite{cohen2003polarized}). Sadly enough, this coproduct is zero for all classes of degree bigger than $d$, at least in the case of singular homology, by work of Tamanoi (\cite{tamanoi2009loop}) and Goresky--Hingston (\cite{goresky2009loop}). For this reason, Goresky and Hingston constructed (in the cohomological case) an alternative version of the coproduct, which we want to recall here in our language: 
\[\xymatrix{ h_{n-1}(LM, M) \cong h_n(LM\times I, LM \times \partial I \cup M\times I) \ar[d]^J \\
h_n(LM, A) \ar[d]^{i_!} \\
h_{n-d}(Map(8,M), A\cap Map(8,M)) \ar[d]^\cong \\ 
h_{n-d}(Map(8,M), f_1(LM)\cup f_2(LM)) \ar[d]\\ 
h_{n-d}(LM\times LM, LM\times M\cup M\times LM) \ar[d]^\cong \\
h_{n-d}(LM,M) \otimes h_{n-d}(LM, M) }\]
Here, $J\co LM\times I \to LM$ is induced by the map $j\co I\times I \to I$, where $j(t,-)$ sends $0$ to $0$, $\frac12$ to $t$ and $1$ to $1$ and is linear on both halfs of the interval. Furthermore, $A$ is the union of the tubular neighborhoods of the images of the two embeddings $f_{1/2}\co LM \to LM$, sending a loop to the same loop with doubled speed on one half of the interval and constant on the other. This $A$ can be chosen as a subbundle since $f_{1/2}(LM)$ are subbundles of $LM \to M$. 

By the same way, we can define a coproduct on $h_*(\Omega M, pt)$. We call both of these coproducts \textit{Goresky--Hingston coproducts} and denote them by $\Psi_{GH}$. Intuitively, they can be seen as unparametrized versions of the Cohen--Godin coproduct.  

\section{Spectral sequences}\label{ss}
\subsection{Preliminaries}\label{sss}
Let $\xi = (E\stackrel{\pi}{\to} B)$ be a fiber bundle with $B$ a path-connected CW-complex and fiber $F$ and $\xi' = (E'\xrightarrow{\pi'} B)$ be a subbundle with fiber $F'$. We define $E^{(p)}$ to be the preimage of the $p$--skeleton of $B$ under $\pi$ and $E'^{(p)}$ correspondingly. Recall that for a homology theory $h_*$ the defining exact couple $C(\xi, \xi')$ of the associated Serre spectral sequence $\EE(\xi, \xi')$ is given by 
\begin{eqnarray*}\xymatrix{ \bigoplus\limits_{p,q} h_{p+q}(E^{(p)}, E'^{(p)})\ar[rr]^i& &\bigoplus\limits_{p,q} h_{p+q}(E^{(p)}, E'^{(p)})\ar[dl]^j\\
& \bigoplus\limits_{p,q} h_{p+q}(E^{(p)}, E^{(p-1)}\cup E'^{(p)})\ar[ul]^k } \end{eqnarray*}
Here the morphisms are the same as in the exact sequence of the triple \[(E^{(p)}, E^{(p-1)}\cup E'^{(p)}, E'^{(p)})\] via the excision isomorphism $h_*(E^{(p-1)}\cup E'^{(p)}, E'^{(p)}) \cong h_*(E^{(p-1)}, E'^{(p-1)})$. So, two of the three maps are induced by inclusions, the third is given by restricting a cycle $[P,a,f]$ to a codimension $0$ submanifold of the boundary. The $E^1$--term is isomorphic to \[\bigoplus_{p-\mbox{cells } \alpha \mbox{ of } B}\hspace{-0.3cm}[(h_*(D^p_\alpha, S^{p-1}_\alpha)\otimes_{h_*(pt)} h_*(\pi^{-1}(x_\alpha), \pi'^{-1}(x_\alpha))]_{p+q},\] with $x_\alpha \in int(\alpha)$, i.e. the $p$-th part of the cellular complex computing $H_p(B; \mathfrak{h}_q(F, F'))$ where $\mathfrak{h}_*(F, F')$ denotes the local system given by the homologies of the fibers. It is easy to work out that, if we have a smooth fiber bundle of smooth manifolds, the isomorphism sends a geometric cycle $z = [P,a,f]$ to \begin{eqnarray*}z \cap \pi^{-1}(x_\alpha) := [P\cap\pi^{-1}(x_\alpha), a|_{P\cap\pi_{-1}(x_\alpha)}, f|_{P\cap\pi_{-1}(x_\alpha)}]\end{eqnarray*} for $f \pitchfork \pi^{-1}(x_\alpha)$. Here one uses the explicit description of the excision morphism given in \ref{gh}. Note furthermore that if $x$ and $y$ are two regular values of $\pi f$ and $\gamma\co I \to B$ is a path with $\gamma(0) = x$ and $\gamma(1) = y$ which is also transverse to $\pi f$, then the fiber transport of the homology class $z\cap \pi^{-1}(x)$ along $\gamma$ is $z\cap \pi^{-1}(x)$.

To ease the formulation of the results of the next sections, we want to fix some general terminology for spectral sequences. A \textit{morphism of spectral sequences} $E^*_{**}$ and $\tilde{E}^*_{**}$ of level $k$ and bidegree $(a,b)$ consists of homomorphisms $f^n\co E^n_{pq}\to \tilde{E}^n_{p+a,q+b}$ for all $n\geq k$ which commute with the differentials and satisfy $H(f^n) = f^{n+1}$. Now assume that $E$ and $\tilde{E}$ converge to graded abelian groups $D^\infty_*$ and $\tilde{D}^\infty_*$ which are filtered by $F^*_*$ and $\tilde{F}^*_*$ respectively. If we have a morphism $f^*_{**}$ between $E$ and $\tilde{E}$ and in addition homomorphisms $D^{\infty}_r \to \tilde{D}^{\infty}_{r+a+b}$ which map $F^p_r$ to $\tilde{F}^{p+a}_{r+a+b}$ and induce $f^\infty$ on $E^{\infty}$, we speak of a \textit{morphism of convergent spectral sequences}. Morphisms of convergent exact couples induce morphisms of the associated convergent spectral sequences. 

\subsection{Intersecting on the base and in the fiber}\label{ifb}
The goal of this subsection is to define Gysin morphisms of Serre spectral sequences which ''compute'' the corresponding Gysin morphisms of the homology of the total space. 

\subsubsection{Intersecting on the base}
Let $\xi = (F\to E\stackrel{\pi}{\to} B)$ be a fiber bundle with $B$ a (finite-dimensional) manifold and $A\subset B$ a closed submanifold of codimension $d$ with $h_*$--oriented normal bundle. Choose a triangulation of $B$ transverse to $A$ and triangulate $A$ as in \ref{triang}. 

\begin{thm}[Intersecting on the base]\label{ib} There is a morphism $s_B(A)$ of convergent spectral sequences of level 1 and bidegree $(-d,0)$ between $\EE(\xi)$ and $\EE(\xi|_A)$ where the spectral sequences are defined by the triangulations above. The morphism is canonical starting with level 2 and induces the usual Gysin morphism $H_p(B; \mathfrak{h}_q(F))\to H_{p-d}(A; \mathfrak{h}_q(F))$ on this level.\end{thm}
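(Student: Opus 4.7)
The plan is to define $s_B(A)$ as a morphism of the underlying exact couples of the two Serre spectral sequences, using the geometric homology description of Section \ref{gh} and the transversality-based construction of Gysin morphisms recalled there. Because the triangulation of $B$ is transverse to $A$ and the triangulation of $A$ is the induced one from \ref{triang}, the intersection $B^{(p)}\cap A$ coincides with $A^{(p-d)}$, and consequently $E^{(p)}\cap E_A = E_A^{(p-d)}$; this compatibility of skeletal filtrations is what will make the bidegree $(-d,0)$ come out correctly.

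To define the morphism on the $E$-part, I represent a class in $E^1_{pq}=h_{p+q}(E^{(p)},E^{(p-1)})$ by a geometric cycle $[P,a,f]$ with $f(P)\subset E^{(p)}$ and $f(\partial P)\subset E^{(p-1)}$. Working cell-by-cell with the local trivializations $\pi^{-1}(\Delta_\alpha)\isom \Delta_\alpha\times F_\alpha$ over each $p$-simplex $\Delta_\alpha$ (transverse to $A$), I deform the base-direction component of $f$ to be transverse to the smooth submanifold-with-boundary $\Delta_\alpha\cap A$ of $\Delta_\alpha$, keeping the fibre-direction component fixed; this is just ordinary finite-dimensional transversality inside each simplex, applied inductively over $p$ so that the boundary condition $f(\partial P)\subset E^{(p-1)}$ is preserved. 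Setting $\widetilde P := (\pi\circ f)^{-1}(A)\subset P$ yields a compact codimension-$d$ submanifold whose normal bundle is the pullback of the normal bundle of $A$ in $B$, hence canonically $h_*$-oriented. I put $s_B(A)[P,a,f]:=[\widetilde P, a|_{\widetilde P}, f|_{\widetilde P}]\in h_{p+q-d}(E_A^{(p-d)},E_A^{(p-d-1)})$ and define $s_B(A)$ on the $D$-part by the same recipe without the boundary condition. A bordism argument of the same shape as the one for the geometric Gysin in Section \ref{gh} shows independence of all choices, and a direct check on geometric cycles shows the maps commute with the three structure maps $i$, $j$, $k$, yielding a morphism of exact couples and therefore of spectral sequences.

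To identify the induced map on $E^2$, I invoke the explicit formula for the isomorphism $E^1_{pq}\isom \bigoplus_\alpha [h_*(D^p_\alpha,S^{p-1}_\alpha)\otimes_{h_*(pt)}h_*(F)]_{p+q}$ given at the beginning of Section \ref{sss}, namely restriction of a geometric cycle to the fibre over the midpoint of $\Delta_\alpha$. Under this identification $s_B(A)$ turns into the cellular Gysin of Section 2.3: a $p$-simplex $\Delta_\alpha$ goes to the signed sum of the $(p-d)$-simplices $\Delta_\beta\subset\Delta_\alpha\cap A$, with the coefficient in $\mathfrak{h}_q(F)$ transported via the trivializations. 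Passing to $H_*$ produces the standard Gysin $H_p(B;\mathfrak{h}_q(F))\to H_{p-d}(A;\mathfrak{h}_q(F))$; canonicity from $E^2$ on then follows from the triangulation-independence of the cellular Gysin proved in Section 2.3. Convergence is automatic: the map induced on the abutment $h_*(E)\to h_{*-d}(E_A)$ is by construction the geometric Gysin $\iota^!_G$ of Section 2.3.

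The main obstacle is the transversality step under the constraint of the skeletal filtration: a naive deformation of $\pi\circ f$ to a transverse map in $B$ would in general push the image out of $B^{(p)}$. I would circumvent this by never leaving the individual simplices of the transverse triangulation, using that each open simplex $\Delta_\alpha$ is a smooth manifold-with-boundary in which $\Delta_\alpha\cap A$ is a smooth submanifold (this is exactly what \ref{triang} guarantees), and by doing the deformations in an order dictated by the skeletal induction so that $f|_{\partial P}$ is already transverse before $f$ on the interior is moved. Once this inductive transversality is in place, the rest of the argument is essentially a cell-wise repackaging of the geometric Gysin construction and its cellular model.
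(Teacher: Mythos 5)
Your proposal follows essentially the same route as the paper: a morphism of the defining exact couples built on geometric cycles, an inductive transversality argument (boundary/lower skeleton first, then the interior rel boundary) to make $\pi\circ f$ transverse to $A$ without leaving the skeletal filtration, intersection $\widetilde P=(\pi f)^{-1}(A)$, and identification of the induced $E^2$-map with the cellular Gysin via the explicit $E^1$-isomorphism. The only quibble is that $B^{(p)}\cap A$ is in general only \emph{contained} in $A^{(p-d)}$ rather than equal to it, but this inclusion is all that is needed for the bidegree $(-d,0)$ to come out.
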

\begin{proof}We want to construct a morphism of the corresponding exact couples $C(\xi)$ and $C(\xi|_A)$. That means, we need to construct morphisms \[\sigma_a\co h_{p+q}(E^{(p)})\to h_{p+q-d}(\pi^{-1}(A^{(p-d)}))\] and \[\sigma_r\co h_{p+q}(E^{(p)},E^{(p-1)})\to h_{p+q-d}(\pi^{-1}(A^{(p-d)}),\pi^{-1}(A^{(p-1-d)}))\] which commute with the boundary maps. We concentrate on the relative case since this is more difficult. 
 
Let $[P,a,f]\in h_{p+q}(E^{(p)}, E^{(p-1)})$. We want to find a homotopy \begin{eqnarray*}(P,\partial P)\times I \to (E^{(p)}, E^{(p-1)})\end{eqnarray*} from $f$ to a map $g$ such that $\pi g$ is transverse to $A$. To smooth $\pi f$, consider open neighbourhoods $U_p$ of the $p$--skeleton $B^p$ with $U_{p-1}\subset U_p$ such that there are smooth retracts $r_p\co U_p\to B^p$ with $r_p|_{U_{p-1}} = r_{p-1}$. In addition, we can assume that $A\cap U_p$ is mapped to $A\cap B^p$ by $r_p$. There is a homotopy $H_1\co \partial P\times I \to U_{p-1}$ from $\pi f|_{\partial P}$ to a smooth map. Extend this homotopy to a homotopy $H_1\co P \times I \to U_p$ from $\pi f$ to a map $\tilde{f}$. This map $\tilde{f}$ is smooth on $\partial P$, so we can find a homotopy $H_2\co P\times I\to U_p$ to a smooth map such that $H_2|_{\partial P} = \tilde{f}\circ\pr_1$. Now, $r_p\circ H_2(x,1)$ is homotopic to $\pi f$ and smooth. Since $E\to B$ is a fibration, we can lift this homotopy. Therefore, we can assume $\pi f$ to be smooth.  

We can homotope $\pi f|_{\partial P}$ in $B^{(p-1)}$ to be transverse to $A\cap B^{(p-1)}$ since we can first do this in $U_{p-1}$ and then use $r_p$. We can extend this to a map $\tilde{f}$ on the whole of $P$. Since $\tilde{f}$ is tranverse to $A$ on $\partial P$, we can homotope it to a $\tilde{g}\co P\to B^{(p)}$ in $B^{(p)}$ which is transverse to $A\cap B^{(p)}$ while leaving $\partial P$ fixed. We can lift this homotopy to $E$ and get a map $g\co P\to E^{(p)}$, for which $\pi g = \tilde{g}$ is transverse to $A$. 

Since $B^{p}\cap A \subset A^{p-d}$, we can define the maps $\sigma_a$ and $\sigma_r$ by transverse intersection of our representative $P$ with $A$. More precisely, define $Q:= (\pi f)^{-1}(A)$ and send $[P,a,f]$ to \[[Q, a|_Q, f|_Q] \in h_{p+q-d}(E|_{A^{(p-d)}}, E|_{A^{(p-1-d)}})\] and the same in the absolute case. Since $\partial Q = \partial P\cap Q$, we get a morphism of exact couples and therefore of convergent spectral sequences $E^1_{\ast\ast}(\xi) \to E^1_{(\ast-d)\ast}(\xi|_A)$.

We now have to check that it induces the usual Gysin morphism on $E^2$. To that end, choose the $x_\alpha$ for the cells of $B$ to be regular values of $\pi f$ and those for the cells of $A$ to be regular values of $\pi f|_Q$.  By choosing paths transverse to $\pi f$ from the $x_\alpha$ for cells $\alpha$ of $B$ to the $x_\beta$ for $\beta$ in the intersection of $\alpha$ with $A$, one sees that our construction coincides with the cellular description of the Gysin morphism by the previous subsection.\end{proof}

\begin{prop}[Naturality]\label{nat}Let $\phi\co E'\to E$ be a map of fiber bundles $\xi'= (F'\to E'\to B')$ and $\xi = (F\to E\to B)$. Let $A\subset B$ be a submanifold and the map on the bases $f\co B'\to B$ be transverse to $A$. Then the following diagram commutes beginning with the second level:

\[ \xymatrix{ \EE(\xi')\ar[r]^{\phi_*}\ar[d]^{s_B(f^{-1}(A))} & \EE(\xi) \ar[d]^{s_B(A)}\\
\EE(\xi'|_{f^{-1}(A)}) \ar[r]^-{\phi_*} & \EE(\xi|_A) } \]
\end{prop}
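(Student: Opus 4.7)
The plan is to reduce commutativity on every page $E^r$ with $r\ge 2$ to commutativity on the $E^2$-term, and then to identify both compositions with the classical Gysin morphism on twisted homology. The key lever is that any morphism of spectral sequences satisfies $H(f^n) = f^{n+1}$, so two morphisms of spectral sequences of the same bidegree that coincide on some page $E^k$ automatically coincide on every later page. It therefore suffices to verify commutativity on the $E^2$-term.

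To carry out the $E^2$-check I would first identify all four corners of the square using Theorem \ref{ib}. That theorem shows that both vertical arrows become the usual Gysin morphisms on homology with local coefficients, of the form $H_p(-;\mathfrak{h}_q(F))\to H_{p-d}(-;\mathfrak{h}_q(F))$, for the pairs $(B,A)$ and $(B',f^{-1}(A))$ respectively. By the standard naturality of the Serre spectral sequence, the horizontal arrows become the push-forward $f_*$ in twisted homology, composed with the change of coefficients induced by $\phi$ on fibres. The problem is thus reduced to the naturality of the ordinary Gysin morphism on twisted homology with respect to a map transverse to the submanifold.

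This naturality I would prove using Jakob's geometric cycles: a class in $H_p(B';f^*\mathfrak{h}_q(F))$ is represented by $[P,a,g]$, and its image under $f_*$ is $[P,a,f\circ g]$. Applying the transversality theorem to $g$ and $f^{-1}(A)$ I can arrange that $g$ is transverse to $f^{-1}(A)$; since $f$ is transverse to $A$, this forces $f\circ g$ to be transverse to $A$ and gives
\[(f\circ g)^{-1}(A) \;=\; g^{-1}(f^{-1}(A)).\]
Both compositions in the square therefore send $[P,a,g]$ to the same geometric cycle on $f^{-1}(A)$, and commutativity at $E^2$ follows. The main piece of bookkeeping will be tracking the identifications of local systems --- checking that $\mathfrak{h}_q(F')$ matches $f^*\mathfrak{h}_q(F)$ under the fibre map induced by $\phi$ --- but once these are fixed, the geometric heart of the argument is merely the set-theoretic equality above, which is automatic once the relevant transversality has been arranged.
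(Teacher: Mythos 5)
Your reduction to the $E^2$-page is sound and is implicitly the same first move the paper makes: since a morphism of spectral sequences satisfies $f^{r+1}=H(f^r)$, agreement on $E^2$ propagates to all later pages, and Theorem \ref{ib} identifies both vertical arrows on $E^2$ with the usual Gysin morphism in homology with local coefficients. Where you diverge is in how you prove naturality of that Gysin morphism: the paper's proof is a two-line appeal to the Thom-isomorphism description --- the normal bundle of $f^{-1}(A)$ in $B'$ is the pullback of the normal bundle of $A$ in $B$, and the Thom class is natural, so the square of Gysin maps commutes --- whereas you argue at the level of representing cycles, using transversality and the identity $(f\circ g)^{-1}(A)=g^{-1}(f^{-1}(A))$ (your observation that transversality of $g$ to $f^{-1}(A)$ plus transversality of $f$ to $A$ forces $f\circ g$ to be transverse to $A$ is correct and is the geometric content). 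Both routes work; the Thom-class argument is shorter and, as the paper notes in its discussion of Gysin morphisms, applies verbatim to ordinary homology with local coefficients, while your cycle-level argument is more concrete and closer in spirit to the construction in Theorem \ref{ib}. One caveat: Jakob's geometric cycles as set up in Section \ref{gh} represent classes in a \emph{generalized} homology theory $h_*$ (with $a\in h^*(P)$), not classes in $H_p(B';f^*\mathfrak{h}_q(F))$ with twisted coefficients; for the $E^2$-verification you should therefore either switch to the cellular or Thom description of the twisted Gysin map (at which point you essentially recover the paper's argument) or be explicit about how a bordism-style representative carries the local coefficient. This is a repairable bookkeeping point, not a flaw in the underlying geometry.
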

\begin{proof}The normal bundle of $f^{-1}(A)$ in $B'$ is the pullback of the normal bundle of $A$ in $B$. Since the Thom class is natural, the proposition follows via the Thom isomorphism description of the Gysin morphism.\end{proof}

We now want to generalize the intersection morphism to an infinite-dimen\-sional context. So let $\xi = (F\to E\to B)$  be a fiber bundle with projection map $\pi$ where $B$ is a Hilbert manifold and $A\subset B$ a closed sub Hilbert manifold of codimension $d$ with $h_*$--oriented normal bundle. Assume, furthermore, that there is a collection of finite-dimensional manifolds $P_1\subset P_2\subset\cdots\subset B$ with inclusions $\iota^i_{i+j}\co P_i\to P_{i+j}$ and $\iota_i\co P_i\hookrightarrow B$ such that every map $f\co X\to B$ from a compact space can be homotoped into one of the $P_i$ -- this is, for example, the case if $B$ is a mapping space (see \ref{map-approx}).

\begin{prop}\label{ib2}In the situation above, there is a (canonical) morphism $s_B(A)$ of convergent spectral sequences of level 2 and bidegree $(-d,0)$ between $\EE(\xi)$ and $\EE(\xi|_A)$. The morphism induces the usual Gysin morphism $H_p(B; \mathfrak{h}_q(F))\to H_{p-d}(A; \mathfrak{h}_q(F))$ on $E^2$. \end{prop}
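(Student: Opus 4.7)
The plan is to reduce the infinite-dimensional statement to the finite-dimensional Theorem~\ref{ib} via the approximating submanifolds $P_i$, using the (relative) transversality theorem and naturality of the Gysin morphism of spectral sequences, and then passing to a filtered colimit.

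First I would replace each inclusion $\iota_i\colon P_i\hookrightarrow B$ by a map $\iota_i'\colon P_i\to B$ transverse to $A$, constructed inductively: assuming $\iota_{i-1}'$ is already transverse to $A$, Theorem~\ref{transrel} (applied relative to the already-transverse $P_{i-1}\subset P_i$, and with $A\subset B$ regarded as a subbundle of the trivial bundle over a point) produces a homotopy of $\iota_i$ extending $\iota_{i-1}'$ to a map transverse to $A$. Setting $A_i := (\iota_i')^{-1}(A)$ and $\xi_i := (\iota_i')^*\xi$, we obtain a fibre bundle over the finite-dimensional manifold $P_i$ together with a closed codimension-$d$ submanifold $A_i\subset P_i$ whose normal bundle is pulled back from $\nu(A\subset B)$ and hence inherits an $h_*$-orientation.

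Theorem~\ref{ib} applied to $\xi_i$ and $A_i$ then yields a morphism of convergent spectral sequences $s_i\colon \EE(\xi_i)\to \EE(\xi_i|_{A_i})$ of bidegree $(-d,0)$, canonical from $E^2$ onward. Since $A_i = P_i\cap A_{i+1}$ and the inclusion $P_i\hookrightarrow P_{i+1}$ is transverse to $A_{i+1}$, Proposition~\ref{nat} implies that the $s_i$ are compatible with the inclusions, so they form a morphism of directed systems from $E^2$ onward.

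The approximation hypothesis ensures that every compact geometric cycle in $E$ (or $E|_A$) is, up to homotopy, represented in some $\xi_i$ (respectively $\xi_i|_{A_i}$), and the analogous statement holds for the cellular chains computing $H_*(B;\mathfrak{h}_*(F))$ and $H_*(A;\mathfrak{h}_*(F))$. Consequently one obtains filtered-colimit identifications $\EE(\xi)\cong \colim_i \EE(\xi_i)$ and $\EE(\xi|_A)\cong \colim_i \EE(\xi_i|_{A_i})$ of convergent spectral sequences from $E^2$ onward; taking $\colim_i s_i$ then produces the desired $s_B(A)$, whose $E^2$-page is the colimit of the finite-dimensional Gysin maps and hence the usual Gysin morphism. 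Canonicity from level 2 follows because any two transverse perturbations of $\iota_i$ are joined by a homotopy which itself may be made transverse to $A$ via Theorem~\ref{transrel}, so the induced map on $E^2$ is independent of choices. The main obstacle I foresee is justifying these colimit identifications as convergent spectral sequences --- compatibility of differentials and of the filtration on the abutment $h_*(E)$ must be verified --- which amounts to checking that the defining exact couple of Section~\ref{sss} commutes with the filtered colimit along the $P_i$, and this in turn rests on compactness of geometric cycles together with the approximation property.
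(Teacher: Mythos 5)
Your proposal is correct and follows essentially the same route as the paper: reduce to the finite-dimensional Theorem~\ref{ib} via the approximating submanifolds $P_i$ made transverse to $A$, and pass to the (filtered colimit) limit using the naturality statement of Proposition~\ref{nat}. The paper merely phrases the colimit elementwise --- a class on $E^n$ is represented at some finite stage $P_N$ (using $H_*(B;\mathfrak{h}_*(F))\cong\colim_i H_*(P_i;\mathfrak{h}_*(F))$), intersected there, and pushed forward, with well-definedness checked by exactly the naturality and compactness arguments you invoke.
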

\begin{proof}Let $x$ be in $E^n_{pq}(\xi)$. This element is represented by an element $z$ in $E^2_{pq}\cong H_p(B; \mathfrak{h}_q(F))$ with $d_2(z) = d_3(z) = \cdots = d_{n-1}(z) = 0$. Since 
\[H_*(B; \mathfrak{h}_*(F)) \cong \colim_i  H_*(P_i; \mathfrak{h}_*(F)),\]
 there is an $N\in\N$ such that there is a preimage $y\in E^2_{pq}(\xi|_{P_N})$ of $z$ under the map \[(\iota_N)_*\co \EE(\xi|_{P_N})\to \EE(\xi)\] with $d_2(y) = d_3(y) = \cdots = d_{n-1}(y) = 0$, therefore representing a preimage $[y]$ of $x$ in $E^n_{pq}(\xi_{P_N})$. By the transversality theorem, we can assume that $P_N$ is transverse to $A$, hence $P_N\cap A \subset P_N$ is a closed submanifold of $P_N$. We now define $s_B(A)(x) = (\iota_N)_*s_B(P_N\cap A)([y])$. We have to check that this is a well-defined map and that it defines a morphism of spectral sequences.

The map is independent of the choices because of the naturality of intersecting on the base: Suppose $(\iota_{n_1})_*(y_1) - (\iota_{n_2})_*(y_2) = d_k(u)$ is a boundary for some $k<n$ where $y_i\in E^2(\xi|_{P_{n_i}})$, $i=1,2$, are cycles. We can find a $v\in E^2(\xi|_{P_{N}})$ ($N$>>0) with $\iota_{N}(v)$ representing $u$ and $\iota^{n_1}_N(y_1)- \iota^{n_2}_N(y_2) = d_k(v)$. Now we use that $\iota_{n_i}$ factors over $\iota_N$, that intersecting on the base is natural and that the $\iota^{n_1}_N[y_1] = \iota^{n_2}_N[y_2]$ in $E^n(\xi|_{P_N})$ to deduce that our map is well-defined.

The map is a morphism of spectral sequences since intersecting on the base is a morphism of spectral sequences for each $P_i$. 
\end{proof}

\begin{remark}If $\xi$ is a smooth fiber bundle of Hilbert manifolds, it is clear by the construction of the intersection on the base that it converges to the usual Gysin morphism on the homology of the total spaces. Here we take in the case of twisted coefficients the Thom isomorphism description of the Gysin map, which makes perfect sense in the twisted setting.\end{remark}

\subsubsection{Intersecting in the fiber}
Let $\xi = (F\to E\stackrel{\pi}{\to} B)$ be a smooth fiber bundle with fiber a Hilbert manifold and base a finite-dimensional manifold and $\xi' = (F'\to E'\xrightarrow{\pi'}B)$ be an open subbundle. Let $\xi_0 = (F_0\to E_0\to B)$ be a subbundle of constant codimension $d$ and $h_*$--oriented normal bundle. Denote by $\xi'_0 = (F'_0\to E'_0 \to B)$ the intersection of the two subbundles. Note that one can often reduce from other situations to the open subbundle case by a tubular neighborhood argument. 

\begin{thm}[Intersecting in the fiber]\label{if1} There is a morphism $s_F(E_0)$ of convergent spectral sequences of level 1 and bidegree $(0,-d)$ between $\EE(\xi, \xi')$ and $\EE(\xi_0, \xi'_0)$. This induces the usual Gysin morphism $H_p(B; \mathfrak{h}_q(F, F'))\to H_p(B; \mathfrak{h}_{q-d}(F_0, F'_0)))$ on $E^2$. Furthermore, it converges to the Gysin morphism in the homology of the total spaces.\end{thm}
\begin{proof} We want to define a morphism of the corresponding exact couples which induces the usual Gysin morphism on the $E^2$-term. We will only discuss explicitely the case of the $E$-term, the others are similar. Let \[[P,a,f]\in h_{p+q}(\pi^{-1}(B^p), \pi^{-1}(B^{p-1})\cup\pi'^{-1}(B^p))\] be a homology class. We need to find a homotopy from $f$ to a $g$ such that $g \pitchfork E_0$, $g|_{\partial P} \pitchfork E_0$, $g \pitchfork \pi^{-1}(x_\alpha)$ (for a point $x_\alpha$ in the interior of every $p$-cell) and $g|_{f^{-1}(\pi^{-1}(x_\alpha))} \pitchfork E_0\cap \pi^{-1}(x_\alpha)$. The first is necessary to define the Gysin morphism, the second to insure that this is compatible with the morphisms in the exact couple and the last two guarantee that it coincides with the Gysin morphism on $E^2$. As in the proof of \ref{ib}, we can as a first thing assume that $f$ is smooth. 

Choose the $x_\alpha$ to be regular values in every $p$-cell $\alpha$ of $\pi f\co (\pi f)^{-1}(B^p-B^{p-1}) \to B^p-B^{p-1}$ and of the restriction $\pi f|_{\partial P}$. Then $f$ and $f|_{\partial P}$ are transverse to the $\pi^{-1}(x_\alpha)$. Choose disks $D_\alpha$ inside $B^p-B^{p-1}$ around the $x_\alpha$ and trivialize $\xi$ on them as $D_\alpha \times F$ and $\xi'$ as $D_\alpha \times F'$. After pulling the bundle $\xi$ back along a smooth self map $B^p \to B^p$ homotopic to the identity, which is identity outside the $D_\alpha$ and maps a small disk around the $x_\alpha$ constantly to $x_\alpha$, we can, after possibly making $D_\alpha$ smaller, assume that $\xi'$ embeds into $\xi$ on $D_\alpha$ as $D_\alpha \times F' \subset D_\alpha \times F$. We have $(\pi f)^{-1}(D_\alpha) \cong D^p\times (\pi f)^{-1}(x_\alpha)$ and we can by a homotopy assume the function $\pr_2 f\co D^p\times (\pi f)^{-1}(x_\alpha) \to F$ to be constant on every 
$D^p\times \{y\}$. Now use the transversality theorem to make first $f|_{(\pi f)^{-1}(x_\alpha)\cap \partial P}$ transverse to $\pi'^{-1}(x_\alpha) \cap E_0 \subset \pi'^{-1}(x_\alpha)$ and then $f|_{(\pi f)^{-1}(x_\alpha)}$ transverse to $\pi^{-1}(x_\alpha) \cap E_0\subset \pi^{-1}(x_\alpha)$ for all $p$-cells $\alpha$ and extend this homotopy ''constantly'' on $D^p\times (\pi f)^{-1}(x_\alpha)$ and on the rest in an arbitrary way. After possibly making the disks smaller, we know that $f$ is already transverse to $E_0$ on the $D^p\times (\pi f)^{-1}(x_\alpha)$. View now $f$ as a map $P \to \pi^{-1}(U_p)$ for $U_p$ as in the last subsection. Then we can make first $f|_{\partial P}$ and then $f$ transverse to $E_0$ by a homotopy leaving $f$ fixed on the $D^p\times (\pi f)^{-1}(x_\alpha)$ such that the image of $f$ is still contained in $\pi^{-1}(B^p)$ by theorem \ref{transrel}. 

By intersecting with $E_0$, we now get a morphism of the exact couples which commutes with the boundary operator as above. More precisely we map $[P,a,f]$ to $[Q, a|_Q, f|_Q]\in h_{p+q-d}(E^{(p)}\cap E_0, (E^{(p-1)}\cup E'^{(p)})\cap E_0)$ with $Q = f^{-1}(E_0)$. This induces a convergent morphism $\EE(\xi, \xi')\to \EE(\xi_0, \xi'_0)$ of level 1 and bidegree $(0,-d)$. That this map induces the Gysin morphism on $E^2$ can be seen by the explicit isomorphism of the $E^1$--term to the cellular complex: there is no difference if we intersect first with $F$ and then with $F_0$ or if we first intersect with $E_0$ and then with $F_0$ (if everything is transverse). The well-definedness is proven as usual. \end{proof}

As in the case of the intersection on the base, we can extend the cases we are interested in to an infinite-dimensional context. So let now $B$ be a Hilbert manifold and the other notation as above and assume that there is a collection of finite-dimensional manifolds $P_1\subset P_2\subset\cdots\subset B$ such that every map $f\co X\to B$ from a compact space can be homotoped into one of the $P_i$. We will state only the absolute form of the theorem since it is enough for our applications. 

\begin{thm}\label{if} In the situation above, there is a morphism $s_F(E_0)$ of convergent spectral sequences of level 2 and bidegree $(0,-d)$ between $\EE(\xi)$ and $\EE(\xi_0)$. This induces the usual Gysin morphism $H_p(B; \mathfrak{h}_q(F))\to H_p(B; \mathfrak{h}_{q-d}(F_0)))$ on $E^2$. Furthermore, it converges to the Gysin morphism in the homology of the total spaces.\end{thm}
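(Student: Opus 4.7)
The plan is to mirror the passage from Theorem \ref{ib} to Proposition \ref{ib2}, replacing intersection on the base by intersection in the fibre. The bundle restricts to $\xi|_{P_i} = (F \to E|_{P_i} \to P_i)$ and the subbundle to $\xi_0|_{P_i} = (F_0 \to E_0|_{P_i} \to P_i)$, so Theorem \ref{if1} furnishes for each $i$ a morphism of spectral sequences
\[
s_F(E_0|_{P_i})\colon \EE(\xi|_{P_i}) \longrightarrow \EE(\xi_0|_{P_i})
\]
of bidegree $(0,-d)$. Since every compact simplicial subcomplex of $B$ can be homotoped into some $P_i$, the approximation hypothesis together with the cellular description of $E^2$ recalled in \ref{sss} gives
\[
E^2_{pq}(\xi) \;\cong\; H_p(B;\mathfrak{h}_q(F)) \;\cong\; \colim_i H_p(P_i;\mathfrak{h}_q(F)) \;\cong\; \colim_i E^2_{pq}(\xi|_{P_i}),
\]
and similarly for $\xi_0$ (the fibre $F_0$ does not move with $i$).

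Given a class $x \in E^n_{pq}(\xi)$ represented by a permanent-up-to-level-$(n-1)$ cycle $z \in E^2_{pq}(\xi)$, pick $N$ large enough that $z$ is the image under $(\iota_N)_*$ of some $y \in E^2_{pq}(\xi|_{P_N})$ with $d_2(y) = \cdots = d_{n-1}(y) = 0$. Define
\[
s_F(E_0)(x) \;:=\; (\iota_N)_* \, s_F(E_0|_{P_N})([y]) \;\in\; E^n_{p,q-d}(\xi_0).
\]
To see well-definedness I would establish the following naturality statement, which is the analogue of Proposition \ref{nat} for intersection in the fibre: for any map of bundles $\phi\colon \xi' \to \xi$ covering a smooth map of bases, the square
\[
\xymatrix{ \EE(\xi') \ar[r]^{\phi_*} \ar[d]_{s_F(\phi^{-1}E_0)} & \EE(\xi) \ar[d]^{s_F(E_0)} \\
           \EE(\xi_0') \ar[r]^{\phi_*} & \EE(\xi_0) }
\]
commutes from $E^2$ onwards. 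This follows from the Thom-class description of the Gysin map, using that the normal bundle of $\phi^{-1}E_0$ in $E'$ is the pullback of the normal bundle of $E_0$ in $E$, so Thom classes are compatible under $\phi^*$. Applied to the inclusions $P_{N}\hookrightarrow P_{N+k}\hookrightarrow B$, this shows that the definition of $s_F(E_0)(x)$ is independent of the choice of $N$ and $y$ by exactly the boundary-chasing argument of Proposition \ref{ib2}: any two lifts differ by a cycle that is itself a lift of a class in some larger $P_M$, where the finite-dimensional Theorem \ref{if1} applies.

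That $s_F(E_0)$ is a morphism of spectral sequences of the claimed bidegree is immediate from the $P_i$-level statement. For the $E^2$-identification, under the colimit isomorphism above the map $s_F(E_0)$ is, by construction, the colimit of the Gysin maps $H_p(P_i;\mathfrak{h}_q(F)) \to H_p(P_i;\mathfrak{h}_{q-d}(F_0))$, and this colimit is the Gysin map for the bundle pair $(E_0,E)$ over $B$; this uses the fact that forming the Thom class in $h^d$ of the normal bundle of $E_0$ commutes with pullback along the smooth maps $P_i \to B$. Convergence to the Gysin map on $h_*(E) \to h_{*-d}(E_0)$ follows the same way: a class in $h_*(E)$ is represented by a geometric cycle $[Q,a,f]$ with $f(Q)$ compact, hence factoring (after homotopy) through $E|_{P_N}$ for some $N$, and Theorem \ref{if1} already handles the finite-dimensional case compatibly with inclusion $P_N \hookrightarrow B$.

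The main obstacle is the naturality diagram above: one must verify that the two ways of intersecting with $E_0$ (restrict-then-intersect versus intersect-then-restrict) agree at the spectral-sequence level, not merely homologically. Once this is in hand the rest is a routine colimit argument, exactly as in Proposition \ref{ib2}. I would prove it by working with the Thom-isomorphism construction of the Gysin map rather than the geometric-cycle construction, so that the compatibility reduces to the purely formal naturality of Thom classes under pullback together with the naturality of the long exact sequence of a pair.
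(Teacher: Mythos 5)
Your proposal is correct and follows essentially the same route as the paper, whose proof of this theorem is literally the one-line reference ``As in the case of the intersection on the base,'' i.e.\ the colimit-over-$P_i$ argument of Proposition \ref{ib2} that you reproduce (with the added simplification that here the subbundle $E_0$ restricts over every $P_i$ without any transversality adjustment on the base). The naturality square you single out as the main obstacle is exactly the fibre-analogue of Proposition \ref{nat} that the paper leaves implicit, and your Thom-class argument for it is the intended one.
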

\begin{proof}As in the case of the intersection on the base.\end{proof}

\subsection{Multiplicative, comultiplicative and module structures}\label{sm} 
Define $E[(a,b)]$ to be the shifted spectral sequence with $E^k[(a,b)]_{pq} = E^k_{(p+a)(q+b)}$. For a $d$--dimensional manifold $M$, set $\HH_*(M) = H_{*+d}(M)$ and also recall the notation $\textbf{h}_*(M) = h_{*+d}(M)$.

\begin{thm}\label{cohen2003loop}Let $M$ be a $d$--dimensional $h_*$--oriented manifold. Then 
\begin{eqnarray*}\EE(\Omega^n M \to L^n M\to M)[(d,0)]\end{eqnarray*}
 can be equipped with the structure of a multiplicative spectral sequence which converges to the Chas--Sullivan product on $\textbf{h}_*(L^n M)$. Furthermore, the induced product on the $E^2$--term $\HH_*(M; \mathfrak{h}_*(\Omega^n M))$ is equal to the intersection product with coefficients in the local system of rings $\mathfrak{h}_*(\Omega^n M)$ whose multiplication is given by the Pontryagin product.\end{thm}
\begin{proof}Let $\xi = (\Omega^n M \to L^n M\to M)$ and denote by $\Delta_M\co M\to M\times M$ the diagonal. Then we define the multiplicative structure as the composition
\begin{eqnarray*}\EE(\xi)\tensor \EE(\xi) \xrightarrow{\times} \EE(\xi\times\xi) \xrightarrow{s_B(\Delta_M)} \EE(\Delta_M^*(\xi\times\xi)) \xrightarrow{\gamma_*} \EE(\xi).\end{eqnarray*}
Here $\gamma$ is defined as in section \ref{cs}. Note that the cross product is a map of spectral sequences. All claims follow from the corresponding ones for Gysin morphisms.\end{proof}

In the next theorem, we need the Chas-Sullivan product in ordinary homology with local coefficients. While in section \ref{cs} we have only considered untwisted coefficients, we can simply use the Thom isomorphism construction for the Gysin map as above.

\begin{thm}\label{fss}Let $M\to N\to O$ be a fiber bundle of $h_*$--oriented manifolds of dimensions $m$, $n$ and $o$ respectively. Then 
\begin{eqnarray*}\EE(L^n M\to L^n N \to L^n O)[(o,m)]\end{eqnarray*}
 can be equipped with the structure of a multiplicative spectral sequence which converges to the Chas--Sullivan product on $\textbf{h}_*(L^n N)$. Furthermore, the induced product on the $E^2$--term $\HH_*(L^n O; \mathfrak{h}_{*+m}(L^n M))$ is equal to the Chas--Sullivan product with coefficients in the local system of rings $\mathfrak{h}_{*+m}(L^n M)$.\end{thm}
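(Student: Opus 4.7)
The plan is to mimic Theorem \ref{cjy}, but to factor the codimension-$n$ inclusion $\iota\colon L^n N\times_N L^n N\hookrightarrow L^n N\times L^n N$ underlying the Chas-Sullivan product through a base step and a fibre step, so that each is handled by one of the two Gysin morphisms of Section \ref{ifb}. Write $\xi=(L^n M\to L^n N\to L^n O)$ and let $\pi\colon N\to O$ denote the given projection. The diagonal $\Delta_N\colon N\hookrightarrow N\times N$ factors as
\[ N\xrightarrow{\Delta_{N/O}} N\times_O N\xrightarrow{\iota_B} N\times N, \]
where $\iota_B$ is the pullback of $\Delta_O$ along $\pi\times\pi$ (codimension $o$) and $\Delta_{N/O}$ is the fibrewise diagonal (codimension $m$); both normal bundles inherit $h_*$-orientations from those of $\Delta_O$ and $\Delta_M$. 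Applying $\mathrm{Map}(S^n,-)$ gives the analogous factorization
\[ L^n N\times_N L^n N\xrightarrow{\iota_F} L^n N\times_O L^n N\xrightarrow{\iota_B} L^n N\times L^n N, \]
in which $\iota_B$ exhibits $L^n N\times_O L^n N$ as the restriction of $\xi\times\xi$ to the closed submanifold $L^n O\times_O L^n O\subset L^n O\times L^n O$, and $\iota_F$ exhibits $L^n N\times_N L^n N$ as a sub-bundle of constant fibre codimension $m$ over the common base $L^n O\times_O L^n O$. Denote the resulting bundles by $\xi\times_O\xi$ and $\xi\times_N\xi$.

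Accordingly, define the multiplicative structure of the shifted spectral sequence as the composition
\begin{align*}
\EE(\xi)\otimes\EE(\xi)&\xrightarrow{\times}\EE(\xi\times\xi)\xrightarrow{s_B}\EE(\xi\times_O\xi)\\
&\xrightarrow{s_F}\EE(\xi\times_N\xi)\xrightarrow{\gamma_*}\EE(\xi),
\end{align*}
where $\gamma\colon L^n N\times_N L^n N\to L^n N$ is the concatenation map of Section \ref{cs}, which covers the corresponding $\gamma\colon L^n O\times_O L^n O\to L^n O$. The cross product has bidegree $(0,0)$, Proposition \ref{ib2} supplies $s_B$ of bidegree $(-o,0)$ (the approximation hypothesis for the base $L^n O\times L^n O$ holding by Theorem \ref{map-approx}), Theorem \ref{if} supplies $s_F$ of bidegree $(0,-m)$, and $\gamma_*$ has bidegree $(0,0)$, so the total bidegree is $(-o,-m)$.

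Convergence to the Chas-Sullivan product on $\h_*(L^n N)$ follows from the composition formula $\iota^!=\iota_F^!\circ\iota_B^!$ for Gysin maps of the nested inclusions $L^n N\times_N L^n N\subset L^n N\times_O L^n N\subset L^n N\times L^n N$, combined with the fact that $s_B$ and $s_F$ each converge to the respective Gysin maps on the homology of the total spaces.

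For the $E^2$-identification, the cross product gives the external cross product of homology with tensored local coefficients; $s_B$ acts on $E^2$ solely on the base via the Gysin map for $L^n O\times_O L^n O\subset L^n O\times L^n O$; $s_F$ acts on $E^2$ solely on the fibre coefficient system as the Gysin morphism $\mathfrak{h}_*(L^n M\times L^n M)\to\mathfrak{h}_{*-m}(L^n M\times_M L^n M)$; and the two $\gamma_*$ maps push forward separately on base and on coefficients. Reading off the two contributions, the base composition is precisely the Chas-Sullivan product on $L^n O$ while the coefficient composition is the Chas-Sullivan product on the fibre $L^n M$. The main obstacle is to verify that this fibre identification really endows $\mathfrak{h}_*(L^n M)$ with its Chas-Sullivan ring structure as a local system over $L^n O$; this follows from the naturality of the Gysin morphisms under local trivializations of $\xi$ together with the explicit geometric description of the $E^2$-isomorphism recalled in Section \ref{sss}.
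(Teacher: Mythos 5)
Your proposal is correct and follows essentially the same route as the paper: the paper likewise defines the product as the composite $\EE(\xi)\otimes\EE(\xi)\xrightarrow{\times}\EE(\xi\times\xi)\xrightarrow{s_B(\iota)}\EE(\iota^*(\xi\times\xi))\xrightarrow{s_F(L^nN\times_N L^nN)}\EE(\nu)\xrightarrow{\gamma_*}\EE(\xi)$ with $\nu$ your $\xi\times_N\xi$, invoking Theorem \ref{map-approx} to put the base $L^nO\times L^nO$ in the setting of the infinite-dimensional Gysin morphisms. Your write-up is in fact more detailed than the paper's (which leaves the bidegree count, the convergence statement and the $E^2$-identification as immediate), and the extra verifications you supply are all consistent with it.
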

\begin{proof}Let $\xi = (L^n M\to L^n N\to L^n O)$, $\nu = (L^n M\times_M L^n M\to L^n N\times_N L^n N \to L^n O\times_O L^n O)$ and $\iota\co L^n O\times_O L^n O \hookrightarrow L^n O \times L^n O$ be the inclusion. We define the multiplicative structure as the composition
\[ 
\EE(\xi)\tensor \EE(\xi)\stackrel{\times}{\to}\EE(\xi\times\xi)\xrightarrow{s_B(\iota)}
\EE(\iota^*(\xi\times\xi)) \xrightarrow{s_F(L^n N\times_N L^n N)}
\EE(\nu) \xrightarrow{\gamma_*}
\EE(\xi)
\]
Here $\gamma$ is again defined as in section \ref{cs}. By proposition \ref{map-approx}, we are in the situation of theorem \ref{ib2} and the intersection morphism is defined.
\end{proof}

There is also the notion of a \textit{comultiplicative spectral sequence}, which is simply a comonoid in spectral sequences, i.e. one has a map $E\to E\otimes E$ which is coassociative (we will not consider counits).

In section \ref{cs} we have defined the Goresky--Hingston coproducts on $h_*(LM)$ and on $h_*(\Omega M)$ for $h_*$ being a graded field. For $V$ a coalgebra over a field $k$, we have furthermore a coproduct on $H_*(M; V)$:
\begin{eqnarray*}H_*(M; V) \to H_*(M; V\tensor V)\xrightarrow{\Delta_*} H_*(M\times M; V\tensor V)\cong H_*(M;V)\otimes_k H_*(M; V)\end{eqnarray*} The same diagram also defines a coproduct in the case of local coefficients. 

\begin{thm}Let $M$ be a $d$--dimensional $h_*$--oriented manifold with $h_*$ a graded field. Then \begin{eqnarray*}\EE((\Omega M, pt) \to (LM, M)\to M)[(0,d-1)]\end{eqnarray*} can be equipped with the structure of a comultiplicative spectral sequence which converges to the Goresky--Hingston coproduct on $h_{*+d-1}(LM, M)$. Furthermore, the induced coproduct on the $E^2$--term $H_*(M; \mathfrak{h}_{*+(d-1)}(\Omega M, pt))$ is equal to the coproduct on $M$ with coefficients in the Goresky--Hingston coalgebra local system $\mathfrak{h}_{*+(d-1)}(\Omega M, pt)$.\end{thm}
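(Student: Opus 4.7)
The plan is to mimic the proof of Theorem \ref{cjy}, giving $\EE(\xi)$ (with $\xi = (\Omega M \to LM \to M)$) a comultiplication that reflects the construction of the coproduct in Section \ref{cs}. The inclusion $i\colon LM \times_M LM \hookrightarrow LM$ is fibrewise of codimension $d$ over $M$, restricting on each fibre to $\Omega M \times \Omega M \hookrightarrow \Omega M$, so $\xi_0 = (\Omega M \times \Omega M \to LM \times_M LM \to M)$ is a subbundle of $\xi$ of constant codimension $d$, placing us in the setting of Theorem \ref{if1}.

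I would then define the comultiplication as the composition
\begin{align*}
\EE(\xi) \xrightarrow{s_F(LM \times_M LM)} \EE(\xi_0) \xrightarrow{\iota_*} \EE(\xi \times \xi) \cong \EE(\xi) \tensor \EE(\xi),
\end{align*}
of total bidegree $(0,-d)$. Here $s_F$ is the intersection-in-the-fibre morphism of Theorem \ref{if1}, $\iota\colon LM \times_M LM \hookrightarrow LM \times LM$ is the inclusion of fibre bundles over $\Delta_M\colon M \to M \times M$, and the final isomorphism is the K\"unneth iso of Serre spectral sequences (valid over a field). Shifting turns this into a morphism $\EE(\xi)[(0,-d)] \to \EE(\xi)[(0,-d)] \tensor \EE(\xi)[(0,-d)]$. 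Convergence to the coproduct on $\h_*(LM)$ would follow from Theorem \ref{if1} (for $s_F \to i^!$), naturality of the Serre spectral sequence (for $\iota_*$), and K\"unneth compatibility on $E^\infty$ and on total homology.

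For the $E^2$ identification, I would analyse each factor in turn. On $E^2$, $s_F$ acts on coefficients as the fibre Gysin $\mathfrak{h}_*(\Omega M) \to \mathfrak{h}_{*-d}(\Omega M \times \Omega M)$, which after K\"unneth on the fibre is precisely the coalgebra coproduct on $V = \mathfrak{h}_{*+d}(\Omega M)$. The map $\iota_*$ on $E^2$ is induced by $\Delta_M$, giving $\Delta_*\colon H_*(M; V\tensor V) \to H_*(M \times M; V \tensor V)$, and K\"unneth on the base identifies the target with $H_*(M;V) \tensor H_*(M;V)$. The composite is exactly the coproduct on $H_*(M;V)$ described before the theorem. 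The hard part I foresee is carefully stating and using the K\"unneth theorem as a genuine isomorphism of \emph{spectral sequences} (not merely on $E^2$), which over a field should follow from naturality together with the fibrewise and basewise K\"unneth isomorphisms, but requires care.
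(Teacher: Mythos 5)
Your proposal is correct and follows essentially the same route as the paper: the comultiplication is defined as the composite of the intersection-in-the-fibre morphism $s_F(LM\times_M LM)$ into the bundle $\Omega M\times\Omega M\to LM\times_M LM\to M$, followed by the pushforward along the inclusion over the diagonal into $\EE(\xi\times\xi)$, followed by the K\"unneth isomorphism. The paper simply declares the remaining verifications obvious, whereas you spell out the convergence and $E^2$ identifications; the content is the same.
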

\begin{proof}Let $\xi = \Omega M \to LM \to M$ and view $(I, \partial I)$ as a bundle over the point and $M$ as the identity bundle $M\to M$. Furthermore let $A$ be as in the end of section \ref{GH} and $\xi_{1/2}$ defined by the embeddings $f_{1/2}$. Consider the diagramm
\[ \xymatrix{
\EE(\xi, M)\cong \EE((\xi, M)\times (I,\partial I)) \ar[d]^{J} \\
\EE(\xi, A)\ar[d]^{s_F(LM\times_M LM)} \\
\EE(\xi\times_M\xi,A\cap \xi \times_M\xi)\cong \EE(\xi\times_M\xi,\xi_1 \cup \xi_2) 
\ar[d]^{i}\\
\EE((\xi,M)\times(\xi,M))\xrightarrow{\cong}
\EE(\xi, M)\otimes \EE(\xi, M)
}\]

All claims follow from the corresponding ones for Gysin morphisms.\end{proof}

There are also spectral sequence results for the coproduct defined by Cohen and Godin, which we will not state here explicitely.

For $E$ a multiplicative spectral sequence, there is also the notion of a module spectral sequence, i.e. a spectral sequence $E'$ together with a morphism $E\otimes E'\to E'$ and the usual coherence diagrams. Recall that, if $M$ is a $d$--manifold and $N$ a module over a ring $R$, we have an $\HH_*(M; R)$--module structure on $\HH_*(M; N)$ defined analogous to the intersection product (here we use the cross product $H_*(M; R)\otimes H_*(M; N)\to H_*(M\times M; N)$). 

\begin{thm}Let $Z$ be a closed $n$--manifold and $M$ be a $d$--dimensional $h_*$--oriented manifold. Then \begin{eqnarray*}\EE(Map^*(Z, M)\to Map(Z,M)\to M)[(d,0)]\end{eqnarray*} can be equipped with the structure of a module spectral sequence over $\EE(\Omega^n M\to L^n M\to M)[(d,0)]$ which converges to the module structure on $\h_*(Map(Z,M))$. Here $Map^*$ stands for the pointed maps. Furthermore, the induced module structure on the $E^2$--term $\HH_*(M; \mathfrak{h}_*(Map^*(Z, M))$ coincides with the module structure described above.\end{thm}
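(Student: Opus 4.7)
The plan is to follow Theorem \ref{cjy} with one factor replaced by the mapping-space bundle. Write $\xi' = (Map^\bullet(Z,M) \to Map(Z,M) \to M)$ and $\xi = (\Omega^n M \to L^n M \to M)$, and let $\Delta_M \colon M \to M \times M$ be the diagonal. I would define the module pairing on spectral sequences as the composition
\[\EE(\xi') \tensor \EE(\xi) \xrightarrow{\times} \EE(\xi' \times \xi) \xrightarrow{s_B(\Delta_M)} \EE(\Delta_M^*(\xi' \times \xi)) \xrightarrow{\gamma_*} \EE(\xi'),\]
where $\times$ is the external product of Serre spectral sequences, $s_B(\Delta_M)$ is the intersection on the base from Theorem \ref{ib} (applicable since the base $M$ is finite-dimensional), and $\gamma \colon Map(Z,M) \times_M L^n M \to Map(Z,M)$ is the map induced by the collapse $c \colon Z \to Z \vee S^n$ from Section \ref{cs}. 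The bidegree contributions $(0,0)$, $(-d,0)$, $(0,0)$ sum to $(-d,0)$, which is absorbed by the shift $[(-d,0)]$ on both sides.

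Convergence to the module structure on $\h_*(Map(Z,M))$ is inherited factorwise: the external product converges to the homology cross product, $s_B(\Delta_M)$ converges to the Gysin morphism $\Delta_M^!$ by Theorem \ref{ib}, and $\gamma_*$ converges to itself on total-space homology. Composing these recovers precisely the definition of the module structure given in Section \ref{cs}. On the $E^2$-term, Theorem \ref{ib} identifies $s_B(\Delta_M)$ with the intersection product with local coefficients, while the cross product and $\gamma_*$ descend to the obvious operations on $H_*(M;-)$ with coefficients in $\mathfrak{h}_*(Map^\bullet(Z,M))$. The resulting pairing on $\HH_*(M; \mathfrak{h}_*(Map^\bullet(Z,M)))$ is thus the intersection product with coefficients in the module local system, as described before the theorem statement.

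The only point that requires genuine verification, and which I view as the main (though routine) obstacle, is checking that this composition actually satisfies the module axioms over the multiplicative spectral sequence of Theorem \ref{cjy}. This reduces to the analogous fact on total-space homology, which in turn follows because $c \colon Z \to Z \vee S^n$ is a comodule map over the $H$-cogroup $S^n \to S^n \vee S^n$ up to homotopy: two iterated applications of $c$ produce the same map up to the isotopy of little discs embedded in $Z$. Combined with the naturality of $s_B$ (Proposition \ref{nat}) and the associativity of the external product of Serre spectral sequences, this yields the required associativity diagram. As in Theorems \ref{cjy} and \ref{fss}, the verification is ultimately a diagram chase requiring no new geometric input.
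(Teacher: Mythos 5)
Your proposal is correct and follows exactly the route the paper takes: the paper's proof is literally ``As in the multiplicative case,'' i.e.\ the composition $\EE(\xi')\tensor\EE(\xi)\xrightarrow{\times}\EE(\xi'\times\xi)\xrightarrow{s_B(\Delta_M)}\EE(\Delta_M^*(\xi'\times\xi))\xrightarrow{\gamma_*}\EE(\xi')$ with one factor replaced by the mapping-space bundle, which is precisely what you wrote. Your additional remarks on convergence, the $E^2$-identification, and the comodule-map argument for the module axioms are consistent with (and more detailed than) what the paper leaves implicit.
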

\begin{proof}As in the multiplicative case.\end{proof}

\begin{thm}Let $Z$ be a closed $n$--manifold. Furthermore, let $M\to N\to O$ be a fiber bundle of $h_*$--oriented manifolds of dimensions $m$, $n$ and $o$ respectively. Then \begin{eqnarray*}\EE(Map(Z, M)\to Map(Z, N)\to Map(Z, O))[(o,m)]\end{eqnarray*} can be equipped with the structure of a module spectral sequence over $\EE(L^n M\to L^n N\to L^n O)[(o,m)]$ which converges to the module structure on $\h_*(Map(Z,N))$. Furthermore the induced module structure on the $E^2$--term \begin{eqnarray*}H_{*+o}(Map(Z,O); \mathfrak{h}_{*+m}(Map(Z, M))\end{eqnarray*} coincides with the module structure described in section \ref{cs}.\end{thm}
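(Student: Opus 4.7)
The strategy closely parallels the proof of Theorem \ref{fss}, with the second copy of the ``ring bundle'' $\xi = (L^n M \to L^n N \to L^n O)$ replaced by the ``module bundle'' $\eta = (Map(Z,M)\to Map(Z,N)\to Map(Z,O))$. Form the product bundle $\eta\times\xi$ with base $Map(Z,O)\times L^n O$ and fibre $Map(Z,M)\times L^n M$, and let
\[
\iota\colon Map(Z,O)\times_O L^n O \hookrightarrow Map(Z,O)\times L^n O
\]
be the fibre-product inclusion. It has codimension $o$ with $h_*$-oriented normal bundle, pulled back from the diagonal $\Delta_O\colon O\hookrightarrow O\times O$. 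Set
\[
\nu = (Map(Z,M)\times_M L^n M \to Map(Z,N)\times_N L^n N \to Map(Z,O)\times_O L^n O);
\]
then $Map(Z,N)\times_N L^n N \subset \iota^*(\eta\times\xi)$ has fibrewise codimension $m$ with $h_*$-oriented normal bundle, pulled back from $\Delta_M$.

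I define the module structure as the composition
\[
\EE(\eta)\tensor \EE(\xi) \xrightarrow{\times} \EE(\eta\times\xi) \xrightarrow{s_B(\iota)} \EE(\iota^*(\eta\times\xi)) \xrightarrow{s_F(Map(Z,N)\times_N L^n N)} \EE(\nu) \xrightarrow{\gamma_*} \EE(\eta),
\]
where $\gamma\colon \nu\to \eta$ is the bundle map induced fibrewise by the loop/map composition from Section \ref{cs} (via the collapse $c\colon Z\to Z\vee S^n$). Because $Map(Z,O)$ and $L^n O$ are Hilbert manifolds, the mapping-space approximation Theorem \ref{map-approx} puts us in the setting of the infinite-dimensional Gysin constructions of Proposition \ref{ib2} and Theorem \ref{if}, so each factor makes sense as a morphism of convergent spectral sequences. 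The bidegrees sum as $(0,0)+(-o,0)+(0,-m)+(0,0)=(-o,-m)$, matching the shift applied to both spectral sequences.

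Convergence of the composite to the $\h_*(L^n N)$-module action on $\h_*(Map(Z,N))$ is immediate: each Gysin factor converges on the abutment to the corresponding total-space Gysin map (cf.\ the remark after Proposition \ref{ib2} and the convergence statement in Theorem \ref{if}), $\gamma_*$ on the abutment equals the $\gamma_*$ of Section \ref{cs}, and the cross product is the usual one; together they reproduce the definition given in Section \ref{cs}. For the identification on $E^2$, the cellular description of the Gysin morphisms from Section \ref{gh} shows that $s_B(\iota)$ induces the intersection product on $Map(Z,O)$ at the local-coefficient level, while $s_F$ induces the fibrewise Gysin on the coefficient system $\mathfrak{h}_{*+m}(Map(Z,M)\times L^n M)\to \mathfrak{h}_{*+m}(Map(Z,M)\times_M L^n M)$; post-composing with $\gamma_*$ then produces precisely the module action with coefficients in the local system of $\mathfrak{h}_{*+m}(L^n M)$-modules $\mathfrak{h}_{*+m}(Map(Z,M))$ from Section \ref{cs}. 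The main point of care is keeping the pulled-back local coefficient systems and their module structures aligned, but this is directly parallel to the bookkeeping already carried out in Theorems \ref{cjy} and \ref{fss}, and no new phenomenon appears.
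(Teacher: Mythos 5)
Your proposal is correct and follows exactly the route the paper intends: its own proof is the single line ``As in the multiplicative case,'' referring to the composition $\times$, $s_B(\iota)$, $s_F$, $\gamma_*$ from Theorem \ref{fss}, which is precisely what you have written out with one tensor factor replaced by the $Map(Z,-)$ bundle. Your bidegree bookkeeping and the appeal to Theorem \ref{map-approx} for the infinite-dimensional Gysin morphisms match the paper's argument.
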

\begin{proof}As in the multiplicative case.\end{proof}

\section{Examples}
In this subsection, we will do three different things, in object and method. First, we want to widen our knowledge about ordinary homology of free loop spaces to the case of certain sphere bundles. Secondly, we want to compute some extraordinary homologies of free loop spaces, for example Landweber exact theories (e.g., complex cobordism and complex K-Theory), Morava K-theory and oriented bordism. We will study the Atiyah--Hirzebruch spectral sequence associated to spheres and (complex) projective space and show that it degenerates on $E^2$. To achieve this, we need to construct first explicit manifold generators for the ordinary homology of the free loop spaces of the spheres and projective spaces, which may be interesting in its own right. At last, we turn our attention to the Goresky--Hingston coproduct. 

\subsection{The case of sphere bundles}\label{esb}
We want to study the homology of the free loop space of certain sphere bundles. While the integral homology of free loop spaces is usually hard to compute, there is a more efficient tool for the rational homology, namely rational homotopy theory. Recall that rational homotopy theory associates functorially to every simply-connected space $X$ a (graded) commutative differential graded algebra (dga) over $\Q$ of the form $\Lambda V$. Here $V$ is a graded rational vector space and $\Lambda V$ is the free (graded) commutative dga over $V$, i.e. a tensor product of polynomial rings for the basis elements of $V$ of even degree and exterior algebras for the odd parts. This is called the \textit{minimal model} $\mathcal{M}(X)$ of $X$. The cohomology of $\mathcal{M}(X) = \Lambda V$ is isomorphic to the rational cohomology of $X$ (see F{\'e}lix, Halperin and Thomas \cite{félix2001rational}). 

In addition, we will need the following two facts of rational homotopy theory:

\begin{enumerate} \item The vector space $V$ is naturally isomorphic to the dual of $\pi_*(X;\Q) := \pi_*(X)\otimes \Q$ (see \cite{félix2001rational}, Thm 15.11).
\item The minimal model of $LX$ depends only on the minimal model of $X$. This can be seen by the explicit formulas of Vigu{\'e}-Poirrier and Sullivan in \cite{vigué1976homology}.\end{enumerate}

While the minimal model of $LX$ only gives information about the rational cohomology, we want to use rational homotopy theory in combination with the Serre spectral sequence to do integral computations for the free loop space $LE$ of a fiber bundle $S^k\to E\to S^n$. We use the computation of $\HH(S^n)$ for $n>1$ by Cohen, Jones and Yan \cite{cohen2003loop}:
\begin{eqnarray*}\mathbb{H}_*(LS^n) &=& \Lambda(a) \otimes \mathbb{Z}[u] \text{ for }n\text{ odd,}\\
 \mathbb{H}_*(LS^n) &=& \Lambda(b)\otimes \mathbb{Z}[a,v]/(a^2,ab,2av) \text{ for }n\text{ even}\end{eqnarray*}

with generators $a\in\mathbb{H}_{-n}(LS^n)$, $b\in\mathbb{H}_{-1}(LS^n)$, $u\in\mathbb{H}_{n-1}(LS^n)$ and $v\in\mathbb{H}_{2n-2}(LS^n)$. For this computation, they used the multiplicative spectral sequence exhibited in \ref{sm} in the special case of singular homology. 

First assume $k>1, n>1$ odd. The odd dimensional spheres have only one nontrivial rational homotopy group, namely $\pi_k(S^k;\Q) = \Q$. Hence $\pi_i(E;\Q) = \pi_i(S^k;\Q)\oplus \pi_i(S^n;\Q)$ in every degree by the long exact sequence of homotopy groups. So we have $\mathcal{M}(E) = \Lambda(x_k)\tensor\Lambda(x_n)$ with $|x_k| = k$ and $|x_n| = n$. For dimension reasons, there are no differentials. Thus we have $\mathcal{M}(E) \cong \mathcal{M}(S^k\times S^n)$ as differential graded algebras. We conclude \[H_*(LE; \Q) \cong H_*(L(S^k\times S^n);\Q).\] Consider the $E^2$--term of the Serre spectral sequence associated to $LS^k\to LE\to LS^n$. Every occuring group is torsionfree. Therefore, our rational computation shows that the spectral sequence degenerates at $E^2$ and we have \[\HH_*(LE) \cong \HH_*(LS^k)\tensor \HH_*(LS^n) \cong \Lambda(a_k,a_n)\tensor \Z[u_k, u_n]\] with $|a_k| = -k$, $|a_n| = -n$, $|u_k| = k-1$ and $|u_n| = n-1$ (for the grading conventions, see section \ref{cs}). Note that all extension are trivial in the sense that 
\begin{eqnarray*}0\to F^{n-1}\to F^n\to F^n/F^{n-1}\to 0\end{eqnarray*}
splits since all occuring groups in $E^\infty$ are torsionfree. To show that the isomorphism above holds also multiplicatively, we apply the following proposition to the Serre spectral sequence associated to $LM\to LN\to LO$: 

\begin{prop}\label{mult}Let $E$ be a multiplicative convergent first quadrant spectral sequence of modules over a noetherian ring $R$ that converges multiplicatively to a graded group $G_*$ and has the grading conventions of the homological Serre spectral sequence. Assume that $E^\infty_{**} \cong E^\infty_{*0}\tensor_R E^\infty_{0*}$ and that this is finitely generated over $R$ in every bidegree. Furthermore, require that all filtration extensions are trivial and $E^\infty_{*0} = R[\overline{x}_0, \overline{x}_1,\dots \overline{x_n}]\tensor \Lambda_R(\overline{x}_{n+1},\overline{x}_{n+2},\dots)$, $|\overline{x_i}|$ odd for $i>n$. Then $G_k \cong \bigoplus_{p+q = k}E_{pq}^\infty$ holds multiplicatively.\end{prop}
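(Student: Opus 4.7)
The plan is to build an explicit graded $R$-algebra map
\[\Phi\colon R[\overline{x}_0,\ldots,\overline{x}_n]\otimes_R\Lambda_R(\overline{x}_{n+1},\ldots)\otimes_R E^{\infty}_{0,*}\longrightarrow G_*\]
that realizes the conjectured isomorphism, and then to show it is bijective by a filtration argument. First I would pick, for every base generator $\overline{x}_i\in E^{\infty}_{p_i,0}$, a lift $x_i\in F^{p_i}_{p_i}\subset G_{p_i}$; this is possible because $E^{\infty}_{p_i,0}=F^{p_i}_{p_i}/F^{p_i-1}_{p_i}$. On the fibre side the edge inclusion $E^{\infty}_{0,*}=F^{0}_{*}\hookrightarrow G_*$ is already a ring map since the spectral sequence is multiplicative, so I take the canonical lifts $\iota(\overline{y})$ it provides. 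Then $\Phi$ is defined on generators by $\overline{x}_i\otimes\overline{y}\mapsto x_i\cdot\iota(\overline{y})$ and extended multiplicatively.

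The key technical step is to verify that the defining relations of the source hold in $G_*$ so that $\Phi$ is well-defined. Graded commutativity is automatic since $G_*$ is graded commutative, and the polynomial generators impose no relations. For an exterior generator $x_i$ of odd total degree $p_i$ one needs $x_i^2=0$. By multiplicativity of the spectral sequence $x_i^2\in F^{2p_i}_{2p_i}$ and its class in $E^{\infty}_{2p_i,0}$ equals $\overline{x}_i^2=0$, so $x_i^2\in F^{2p_i-1}_{2p_i}$; graded commutativity moreover forces $2x_i^2=0$. I would then adjust $x_i$ by an element of $F^{p_i-1}_{p_i}$ if necessary, using the trivial-extension hypothesis and the tensor decomposition $E^{\infty}\cong E^{\infty}_{*,0}\otimes E^{\infty}_{0,*}$ to control the possible correction terms filtration-by-filtration, so as to arrange $x_i^2=0$. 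I expect this to be the main obstacle: in the paper's sphere-bundle application everything is torsion-free and the $2$-torsion element $x_i^2$ vanishes automatically, but over a general noetherian $R$ a careful inductive adjustment of the lifts is needed.

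Once $\Phi$ is defined, the last step is to check it is an isomorphism. The source carries the obvious filtration by base-degree, and $\Phi$ is filtration-preserving by construction together with the multiplicativity of the spectral sequence. The induced map on associated graded is exactly the assumed isomorphism $E^{\infty}_{*,0}\otimes E^{\infty}_{0,*}\xrightarrow{\cong}E^{\infty}$. Since the filtration in each total degree is finite (first quadrant), all pieces are finitely generated over the noetherian ring $R$, and the extensions are trivial by hypothesis, a standard induction on filtration level together with the five-lemma applied to the short exact sequences $0\to F^{p-1}_{p+q}\to F^{p}_{p+q}\to E^{\infty}_{p,q}\to 0$ shows that $\Phi$ is bijective in every degree, giving the desired multiplicative isomorphism.
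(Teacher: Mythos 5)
Your overall strategy is the same as the paper's: lift the base generators $\overline{x}_i\in E^\infty_{p_i,0}$ to elements $x_i\in F^{p_i}_{p_i}=G_{p_i}$, use that $E^\infty_{0,*}=F^0_*$ sits inside $G_*$ as a subring, and send $\overline{y}\cdot\prod\overline{x}_i^{k_i}$ to $\iota(\overline{y})\cdot\prod x_i^{k_i}$. Where you differ is the endgame. The paper proves surjectivity of this map by induction on filtration degree (the monomials of base degree $p+1$ give an $E^\infty_{0,*}$-generating set of $F^{p+1}/F^{p}$) and then gets injectivity from the noetherian hypothesis: after identifying source and target additively --- this is where trivial extensions and finite generation enter --- a surjective endomorphism of a noetherian module is injective (the $\ker(L^N)=\ker(L^{N+1})$ trick). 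Your route --- filter the source by base degree, note that the map induced on the associated graded piece in filtration $p$ is the multiplication map $E^\infty_{p,0}\otimes E^\infty_{0,*}\to\bigoplus_q E^\infty_{p,q}$, i.e.\ the hypothesized isomorphism, and apply the five lemma inductively --- is cleaner and in fact uses neither noetherianity, nor finite generation, nor triviality of the extensions; those hypotheses are only needed for the paper's injectivity argument. (Both arguments require reading the hypothesis $E^\infty_{**}\cong E^\infty_{*,0}\otimes E^\infty_{0,*}$ as asserting that \emph{multiplication} induces this isomorphism, which is how it is used in the applications.)

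The one step of your proposal that does not go through as written is precisely the one you flag: arranging $x_i^2=0$ for an exterior generator of odd degree $p_i$. If you replace $x_i$ by $x_i+c$ with $c\in F^{p_i-1}_{p_i}$, graded commutativity makes the cross terms cancel and gives $(x_i+c)^2=x_i^2+c^2$; the correction only changes $x_i^2$ by the square of a lower-filtration element, and there is no reason a $c$ with $c^2=-x_i^2$ should exist. What is automatic is only that $x_i^2\in F^{2p_i-1}_{2p_i}$ and $2x_i^2=0$, so the obstruction lives in the $2$-torsion of $F^{2p_i-1}_{2p_i}$ and does not visibly vanish under the stated hypotheses. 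To be fair, the paper's own proof does no better: it simply declares the lifted map to be ``clearly a map of algebras.'' In the intended applications the point is harmless (the groups in which $x_i^2$ lives are $2$-torsion free, or the relevant filtration quotients vanish), but as a standalone assertion over a general noetherian $R$ this step needs either an additional hypothesis (e.g.\ no $2$-torsion in the relevant degrees, or all $\overline{x}_i$ polynomial) or a genuine argument, and your proposed adjustment of lifts does not supply one.
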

\begin{proof}Denote the filtration of $G_q$ by $F^*_q$. As $E^\infty_{p0} = F_p^p/F_p^{p-1}$ and $F_p^p = G_p$, we have a surjective map $G_p\to E^\infty_{p0} = E^2_{p0}$. Lift the $\overline{x_j}$ to $x_j$ in $G_p$. Since the multiplication on $E^\infty = F^p/F^{p-1}$ is induced by that on $G_*$, we have that $x_i x_j$ is a lift for $\overline{x_i}\overline{x_j}$. The groups $E^\infty_{0*}$ act on $G_*$ and $E^\infty$ in a compatible way via multiplication. Consider the map 
\[L\co E^\infty_{**}\to G_*,\hspace{0.4cm}y\cdot \Pi \overline{x_i}^{k_i}\mapsto y\cdot \Pi x_i^{k_i},\]
 where $y\in E^\infty_{0*}$. This map is clearly a map of algebras. It is also clear that it is surjective onto $F^0$. Assume inductively that it is surjective onto $F^p$. The products $\Pi\overline{x_i}^{k_i}$ with $\sum k_i|x_i| = p+1$ form a $E^\infty_{0*}$--basis for $F^{p+1}/F^p$ and are images of $L$. Therefore, we see that $L$ is surjective onto $F^{p+1}$ and conclude by induction that it is surjective onto the whole of $G_*$. Since $E^\infty_{**} \cong G_*$ additively and both are finitely generated (hence noetherian) $R$--modules in every degree, $L$ is an isomorphism (of algebras). Indeed, identify $E^\infty_{**}$ and $G_*$ and consider the ascending chain $\ker(L^n)$, $n\in\mathbb{N}$. For some $N$, we have $\ker(L^N) = \ker(L^{N+1})$. Since every $y\in G_*$ is of the form $L^N(y')$, we get $Ly = 0$ iff $y=0$.\end{proof}

Now we consider the case $k>1$ odd and $n>2$ even. Assume furthermore that $k\neq n\pm 1$ and that $n-1$ is no multiple of $k-1$. Even dimensional spheres $S^n$ have two non-zero rational homotopy groups, namely $\pi_n(S^n;\Q)\cong\Q$ and $\pi_{2n-1}(S^n,\Q)\cong \Q$. By the long exact homotopy sequence, we have \[\pi_i(E;\Q) = \pi_i(S^k;\Q)\oplus\pi_i(S^n;\Q)\] in every degree. So we get \[\mathcal{M}(E) \cong \Lambda(x_k)\tensor \Z[x_n]\tensor\Lambda(y_{2n-1})\] with $|x_k| = k$, $|x_n| = n$ and $|y_{2n-1}| = 2n-1$. Since the Serre spectral sequence associated to $S^k\to E\to S^n$ degenerates at $E^2$, we have $d(x_k) = d(x_n) = 0$ and $d(y_{2n-1})$ must be a non-zero multiple of $x_n^2$. Therefore, $\mathcal{M}(E)$ is isomorphic to the minimal model $\mathcal{M}(S^k\times S^n)$ and hence we have \[H_*(LE;\Q)\cong H_*(LS^k\times LS^n; \Q).\] Consider the Serre spectral sequence associated to $LS^k\to LE\to LS^n$. A differential $d_i(x)$ can only be non-zero if $d_i(x)$ is torsion. The only torsion elements of $\HH_*(LS^n)$ are the $av^j$ for $j\geq 1$ (see \ref{es} for notation).  Hence, we have \[d_i(1\tensor a_{LS^k}) = d_i(1\tensor u_{LS^k}) = d_i(a_{LS^n}\tensor 1) = d_i(b_{LS^n}\tensor 1) = d_i(v_{LS^n}\tensor 1) = 0\] for all $i\geq 2$ as one sees by an analysis of possible differentials. The $E^2$--term of the Serre spectral sequence is isomorphic to $\HH_*(LS^n)\tensor\HH_*(LS^k)$. By multiplicativity, the spectral sequence degenerates at $E^2$. Because filtration issues may come up, we cannot deduce in this case the concrete structure of the homology. 

\subsection{Manifold generators}
In this subsection, we will exhibit concrete manifold generator for the homology classes of the free loop spaces of spheres and projective spaces. Our basic source for the computation of these homology rings is again Cohen, Jones and Yan \cite{cohen2003loop}.
\subsubsection{The spheres}\label{es}
In this section, we will present concrete generators for the homology of $LS^n$ for $n>1$. To achieve this, we consider first the simpler case of $\Omega S^n$. It is well known that $H_*(\Omega S^n) \cong \mathbb{Z}[x]$ with $x\in H_{n-1}(\Omega S^n)$, where the product is induced by composing loops. By adjunction from the identity, we get a map $f\co S^{n-1} \rightarrow \Omega\Sigma S^{n-1} \cong \Omega S^n$. This represents a class in $H_{n-1}(\Omega S^n)$, which is easy to be seen a additive generator.

To visualize $x$, think of the base point as the north pole. The points $p\in S^{n-1}$ of the equator parametrize the minimal geodesics $\gamma_p$ between north and south pole. Now choose a distinguished minimal geodesic $\delta$ from the south to the north pole (the ''way backwards''). Then $p \mapsto \delta\ast\gamma_p$ defines $f\co S^{n-1} \to \Omega S^n$ where $\ast$ denotes the concatenation of paths (note that the suspensions above are reduced).

Now consider the free loop space. We use the same notation for the generators of homology as in the last section. Since $a$ is in $H_0(LM)$, it can be represented by an arbitrary loop, for example a constant loop. By studying the Serre spectral sequence for $\Omega M \to LM\to M$, it can be shown that $j_*\co H_{n-1}(\Omega M) \to \mathbb{H}_{-1}(LM)$ is an isomorphism (see \cite{cohen2003loop}). Therefore, $j_*(x)$ is a generator of $\mathbb{H}_{-1}(LM)$ and hence up to sign equal to $b$ for $n$ even and to $au$ for $n$ odd. 

For identifying the other generators, we begin with the easier case of $n$ \textit{odd}. Consider the Gysin morphism \[j^!\co H_*(LS^n)\to H_{*-d}(\Omega S^n).\] We want to find a preimage of $x$ under $j^!$. This is a fortiori a generator of $\HH_{n-1}(LS^n)$ and therefore up to sign equal to $u$. Let $S^n$ be equipped with the standard metric of the sphere of circumference 1 and $STS^n$ be the unit sphere bundle in the tangent bundle $TS^n$. Let $V$ be a vector field of unit length. We define a map $F\co STS^n\to LS^n$ by

\begin{eqnarray*}(p,v)\mapsto \left(t\mapsto \begin{cases}
\exp_p(tv) \text{ for } t\leq \frac12 \\
\exp_{-p}((t-\frac12)V(-p)) \text{ for } t\geq\frac12\end{cases}\right)\end{eqnarray*}

Here $p$ denotes a point in $S^n$ and $v$ is a unit tangent vector to $p$. By the description of $x$ above it is clear that $j^!(F_*[STS^n]) = x$.

This construction cannot work for $n$ even since in this case we have $\HH_{n-1}(LS^n) = 0$ for $n>2$ and the generator $bv\in \HH_{n-1}(LS^2)$ maps to zero under $j^!$, because a representative $(S,f)$ can be chosen with $\im(\ev\circ f) = pt$. \footnote{This gives an eccentric proof for the theorem of the hairy ball, because the only thing we used for $n$ odd was the existence of a non-vanishing vector field. }

To construct an explicit representative of $v$, we need an alternative representative of $x^2$. By our description above we get as a representative:

\begin{eqnarray*}(v_1,v_2)\mapsto \left(s\mapsto\begin{cases}
\exp_p(2sv_1) \for s\leq \frac14\\
\exp_{-p}(2(s-\frac14)w) \for \frac14\leq s\leq\frac12\\
\exp_p(2(s-\frac12)v_2) \for \frac12\leq s \leq \frac34\\
\exp_{-p}(2(s-\frac34)(-w)) \for \frac34\leq s \leq 1\end{cases}\right)\end{eqnarray*}

Here $v_1$ and $v_2$ are unit tangent vectors at the base point $p$ and $w$ is a unit tangent vector at $-p$. This map is now easy to be seen to be homotopic to

\begin{eqnarray}\label{Formel}(v_1,v_2)\mapsto \left(s\mapsto\begin{cases}
\exp_p(sv_1) \for s\leq \frac12\\
\exp_{-p}((s-\frac12)(-v_2)) \for \frac12\leq s \leq 1\\
\end{cases}\right)\end{eqnarray}

where $-v_2$ denotes the parallel transport of $v_2$ along any geodesic from $p$ to $-p$. By this description, it is now easy to construct a preimage of $x^2$ under $j^!$: consider the pullback $E$ of the product bundle $STS^n\times STS^n$ via the diagonal $S^n \to S^n\times S^n$. Then construct $F\co E\to LS^n$ by defining $F(p,v_1,v_2)$ by the formula above (\ref{Formel}). This is obviously a preimage of $x^2$. Since $x^2$ is an additive generator of $H_{2n-2}(\Omega S^n)$ and $\Z\{v\}$ is the non-torsion part of $\HH_{2n-2}$, the class $[E,F]$ equals $v$ up to sign. 

By the Chas--Sullivan product, we get now explicit generators for every class in $H_*(LM)$. In particular every class in $H_*(LM)$ is represented by a manifold. 

\begin{remark}We could also have used the explicit generators to deduce the multiplicative structure of $H_*(LM)$ from the additive structure, without using the compatibility of the Chas--Sullivan product with the Serre spectral sequence. \end{remark}

\subsubsection{Projective spaces}\label{eps}
In \cite{cohen2003loop} the authors show that 
\begin{eqnarray*}\mathbb{H}_*(L\mathbb{C}\mathbb{P}^n) \cong \Lambda(w)\otimes\mathbb{Z}[c,u]/(c^{n+1}, wc^n, (n+1)c^nu),\end{eqnarray*}
where $|w| = -1$, $|c|=-2$ and $|u| = 2n$. By the same methods, one can show
\begin{eqnarray*}\mathbb{H}_*(L\mathbb{H}\mathbb{P}^n) \cong \Lambda(w)\otimes\mathbb{Z}[c,u]/(c^{n+1}, wc^n, (n+1)c^nu)\end{eqnarray*} 
with $|w| = -1$, $|c|=-4$ and $|u|=4n+2$. We will find explicit generators for these homology classes. In the following, $\mathbb{K}$ will stand for one of $\mathbb{C}$ or $\mathbb{H}$ and $d$ will be the $\mathbb{R}$-dimension of $\mathbb{K}$. 

\label{plp}
There is a homotopy fibration $S^{d(n+1)-1}\rightarrow \mathbb{KP}^n\rightarrow \mathbb{KP}^\infty$ (see \cite{cohen2003loop}). If we loop this, we get a homotopy section $\Omega(\kp^\infty)\to \Omega (\kp^n)$, since the map $\Omega\kp^\infty \simeq S^{d-1} \to S^{d(n+1)-1}$ is nullhomotopic. Therefore, we get (additively): 
\begin{eqnarray*}H_*(\Omega\mathbb{KP}^n) \cong H_*(\Omega S^{d(n+1)-1})\otimes H_*(S^{d-1})\cong \Z[y]\otimes \Lambda(z),\end{eqnarray*}
where $|y| = d(n+1)-2$ and $|z| = d-1$. Here we use $\Omega \mathbb{KP}^\infty \simeq S^{d-1}$. The above isomorphism holds also multiplicatively, because the Serre spectral sequence is here multiplicative, since the Pontrjagin product is induced by a map and the Serre spectral sequence is natural (note that there are no filtration issues for dimension reasons). 

In this whole section, let the projective spaces be equipped with the metric coming from the standard metric of the
sphere of circumference 1. The generator $c$ is represented by $c\co \mathbb{KP}^{n-1}\hookrightarrow\mathbb{KP}^n\hookrightarrow L\mathbb{KP}^n$. This holds because $\ev\circ c\co \mathbb{KP}^{n-1}\rightarrow \mathbb{KP}^n$ represents a generator of $\HH_{-d}(\kp^n)$.

Let $d>1$ and $STS^{d(n+1)-1}$ be the unit sphere bundle of the tangent bundle of $S^{d(n+1)-1}$. Let $E$ be the quotient of the sphere bundle via the (isometric) action of $S^{d-1}$ on $S^{d(n+1)-1}$. Clearly, the composite $STS^{d(n+1)-1}\rightarrow LS^{d(n+1)-1} \rightarrow L\mathbb{KP}^n$ of the generator $u$ of $H_*(LS^{d(n+1)-1})$ and the looped Hopf map factors over $E$. This map $E\rightarrow L\mathbb{KP}^n$ is our generator $u$. To see this, we simply intersect $u$ with $\Omega\mathbb{KP}^n$ and observe that this equals $y$ in the notation above.

For the last generator look at $T\mathbb{KP}^n|_{\mathbb{KP}^{n-1}}$. This has certainly a nonvanishing section $s$ by obstruction theory, because the $(dn)$-th cohomology of $\mathbb{KP}^{n-1}$ vanishes. We can assume that $s(x)$ has unit length for every $x\in\mathbb{KP}^{n-1}$. Let now $L$ be the (trivial) $S^{d-1}$-bundle in $T\mathbb{KP}^n|_{\mathbb{KP}^{n-1}}$ generated by $s$. Then we define our map $w'\co L\rightarrow L\mathbb{KP}^n$ via

\begin{eqnarray*}(p,l) \mapsto \begin{cases} t\mapsto \exp_p(\frac12 t\cdot l)\mbox{ for } t\leq\frac12\\
t\mapsto \exp_p(\frac12 (1-t)\cdot s(p)) \mbox{ for } t \geq \frac12 \end{cases}\end{eqnarray*} 

If we multiply the represented homology class $[w']$ with $c^{n-1}$, i.e. intersect with a $\kp^1$ connecting a $p\in \kp^{n-1}$ with $\exp_p(\frac14 s(p))$, we get obviously the image of the generator $[f]\in H_{d-1}(\Omega S^d)\cong H_{d-1}(\Omega\kp^{1})$ under the map $\Omega\kp^{1}\hookrightarrow\Omega\kp^n\hookrightarrow L\kp^n$. The first is an isomorphism on $H_{d-1}$ because of the description of the homology of $\Omega\kp^n$ above. The latter is also an isomorphism on $H_1$ by inspection of the Serre spectral sequence associated to $\Omega\kp^n\to L\kp^n\to \kp^n$ (see \cite{cohen2003loop}). So $w'$ is an additive (non-torsion) generator since $[w']c^{n-1}$ is. Since $\mathbb{H}_{-1}(L\kp^n) \cong \mathbb{Z}$, this settles $w'$ (modulo sign) as the generator $w$ described in \cite{cohen2003loop}. 

\begin{remark}A similar description in the case $\mathbb{K} = \mathbb{R}$ can be found in the author's diploma thesis \cite[section 3.6.2]{meier2009string}.\end{remark}

\subsection{Calculations in generalized homology theories}
\subsubsection{Complex cobordism and Landweber-exact theories}\label{cc}
We define natural transformations $\mu\co MSO_n(X) \to H_n(X)$ and $\nu\co MU_n(X) \to H_n(X)$ by sending a representing cycle $[M,f]$ to $f_*([M])$. 

We have the following well-known proposition:
\begin{prop}If $X$ is (homotopy equivalent to) a CW-complex, then its Atiyah--Hirzebruch spectral sequence (AHSS) for $MU$ degenerates at $E^2$ if and only if $\nu\co MU_n(X)\to H_n(X)$ is an epimorphism for all $n\geq 0$.\end{prop}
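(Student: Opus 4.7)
The strategy is to identify $\nu$ with the edge homomorphism of the Atiyah--Hirzebruch spectral sequence (AHSS) for $MU$ and to propagate vanishing from the bottom row via the $MU_*$-module structure. Recall the AHSS has $E^2_{p,q} = H_p(X; MU_q(pt))$ and converges to $MU_{p+q}(X)$; since $MU_*(pt)$ is a polynomial ring on even-degree generators, each $MU_q(pt)$ is free abelian, so the universal coefficient theorem gives a canonical isomorphism $E^2_{p,q} \cong H_p(X) \otimes_\Z MU_q(pt)$, and the odd rows vanish. Using the geometric (bordism) description of $MU_*$, one verifies that $\nu$ coincides with the composite
\[ MU_n(X) \twoheadrightarrow E^\infty_{n,0} \hookrightarrow E^2_{n,0} = H_n(X), \]
where the second map is injective because no differentials enter the bottom row of a first-quadrant spectral sequence.

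The ``only if'' direction is then immediate: collapse at $E^2$ means $E^\infty_{n,0} = E^2_{n,0}$, so the displayed composite is surjective, hence $\nu$ is surjective.

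For the converse, suppose $\nu$ is surjective. Then $E^\infty_{n,0} \hookrightarrow E^2_{n,0}$ is surjective as well, so all outgoing differentials from the bottom row vanish on every page. To extend this to arbitrary bidegrees, I would invoke that the AHSS for the ring spectrum $MU$ is a module spectral sequence over $MU_*(pt)$, with every $d_r$ being $MU_*(pt)$-linear up to sign. Under the identification $E^2 \cong H_*(X) \otimes MU_*(pt)$, the bottom row $E^2_{*,0}$ generates $E^2$ as an $MU_*(pt)$-module via the action $\alpha \otimes x \mapsto \alpha \cdot x$, and by induction on the page number the same remains true on each subsequent $E^r$. Thus every class in $E^r_{p,q}$ is of the form $\alpha \cdot x$ with $x$ coming from the bottom row, and $MU_*(pt)$-linearity forces $d_r(\alpha\cdot x) = \pm \alpha\cdot d_r(x) = 0$.

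The main technical point is to justify the $MU_*(pt)$-linearity of the AHSS together with the compatibility of this module action on $E^2$ with the universal coefficient identification. Both are standard facts about the AHSS of a ring spectrum but must be invoked; once accepted, the reduction from the bottom row to the full $E^2$-page is purely formal.
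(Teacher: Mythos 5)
Your argument is correct. Note, however, that the paper offers no proof at all for this statement: it is labelled ``well-known'' and implicitly referred to the parallel result for $MSO$ (Conner--Floyd, 15.1), which is cited explicitly a few lines below. So there is nothing in the paper to compare against, and your write-up supplies the standard argument that the paper omits. The two facts you flag as needing justification --- that $\nu([M,f])=f_*[M]$ is the edge homomorphism $MU_n(X)\twoheadrightarrow E^\infty_{n,0}\hookrightarrow E^2_{n,0}=H_n(X)$, and that the skeletal filtration makes the AHSS a spectral sequence of $MU_*(pt)$-modules with the $E^2$-action compatible with the coefficient pairing --- are indeed standard, and since $MU_*(pt)$ is concentrated in even degrees the sign in $d_r(\alpha\cdot x)=\pm\alpha\cdot d_r(x)$ is harmless. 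One point worth making explicit, because it is where your proof genuinely uses $MU$ rather than an arbitrary ring spectrum: the claim that the bottom row generates $E^2$ over $MU_*(pt)$ rests on $MU_q(pt)$ being free abelian, so that $H_p(X;MU_q(pt))\cong H_p(X)\otimes MU_q(pt)$ with no $\mathrm{Tor}$ term; for $MSO$, whose coefficients have $2$-torsion, the $\mathrm{Tor}(H_{p-1}(X),MSO_q)$ summand is not in the image of the module action on the bottom row, which is why Conner and Floyd must argue differently there. With that caveat recorded, your reduction from the bottom row to the full $E^2$-page is complete.
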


To see the degeneration of the AHSS for $MU$, we have to find stably almost complex structures on our generators of \ref{es} and \ref{eps}. The sphere $S^n$ is framed and therefore stably almost complex. Recall, we denoted the sphere subbundle of the tangent bundle by $STS^{n}$. We have that $STS^n\subset S(TS^n\oplus\epsilon) \cong S^n\times S^n$ has trivial normal bundle. Since the tangent bundle of $S^n\times S^n$ is stably trivial, the tangent bundle of $STS^n$ is stably trivial, too, and therefore stably almost complex. This finishes the case for the sphere.

The manifolds $\cp^n$ and $\cp^n\times S^1$ are clearly almost complex. It remains to show that $STS^{2n+1}/S^1$ is stably almost complex, where $S^1$ acts via complex multiplication and its derivative. As in the paragraph above, it suffices to consider $(S^{2n+1}\times S^{2n+1})/S^1$, where $S^1$ acts via the diagonal action. Embed $S^{2n+1}\times S^{2n+1}$ into $\C^{n+1}\times\C^{n+1}$. This gives an embedding $(S^{2n+1}\times S^{2n+1})/S^1 \hookrightarrow (\C^{n+1}\times\C^{n+1})/\C^*\cong \cp^{2n+1}$ of codimension 1. Since the latter is complex and the normal bundle is trivial (note that $(S^{2n+1}\times S^{2n+1})/S^1$ is simply connected), degeneration is proven. 

By localizing at $p$, we can deduce that the AHSS for $LM$ with $M$ a sphere or a complex projective space degenerates at $E^2$ also for $BP$. It is not difficult to show that one gets thereby degeneration also for all Landweber-exact theories, i.e. homology theories $h_*$ of the form $h_*(X) = MU_*(X)\otimes_{MU_*} h_*$ or $h_*(X) = BP_*(X)\otimes_{BP_*} h_*$. This includes, among others, complex K-homology, elliptic homology, the Johnson--Wilson theories $E(n)$ and the Morava $E$--theories $E_n$. Note that the isomorphism $h_*(LM) = MU_*(LM)\otimes_{MU_*} h_*$ or $h_*(LM) = BP_*(LM)\otimes_{BP_*} h_*$ holds multiplicatively with respect to the Chas--Sullivan product for an $MU_*$-- respectively $BP_*$--oriented manifold $M$. 

A further class of homology theories for which we get degeneration is provided by all homology theories $h$ with $\Tor^1(H_*(LM), h_*) = 0$ and a natural transformation $MU \to h$ which is surjective on coefficients. Indeed, we get in this case a map of Atiyah-Hirzebruch spectral sequences $\EE(LM, MU) \to \EE(LM, h)$, which is surjective on every $E^r$-term. Furthermore, we get also degeneration for all homology theories $h'$ with $h'_*(X) \cong h_*(X) \otimes_{h_*} h'_*$ for all $X$. Examples for such homology theories include the Morava K-theories $k(n)$ and $K(n)$ and the spectra $P(n)$ and $B(n)$ for a fixed prime $p$ where $H_*(LM)$ has no $p$-torsion. The transformations $MU \to h$ are in all these examples multiplicative (see, for example, \cite{EKMM}, V.4). Therefore, the Chas--Sullivan product is in these examples determined by that in the $MU$-case. 

For the odd dimensional spheres, we have in addition that all filtration extension are trivial, i.e.
\begin{eqnarray*}0\to F^{n-1}\to F^n\to F^n/F^{n-1}\to 0\end{eqnarray*}
splits since $F^n/F^{n-1}$ is a free $h_*$--module. Therefore, we have additively
\begin{eqnarray*}h_*(LS^{2k+1}) \cong H_*(LS^{2k+1})\tensor h_*\end{eqnarray*}
for all mentioned homology theories $h_*$. 

We have $E^2(LM, MU)_{pq} = H_p(LM)\otimes MU_*(pt)$. By \ref{mult}, the above isomorphisms
\begin{eqnarray*}MU_*(LS^{2k+1}) &\cong& H_*(LS^{2k+1})\tensor MU_*\\
BP_*(LS^{2k+1}) &\cong& H_*(LS^{2k+1})\tensor BP_*\end{eqnarray*}
hold now also multiplicatively. Thereby, we can conclude an analogous isomorphism for all mentioned homology theories. 

In the light of the physical interest in equivariant index theory on the free loop space, it would be exciting to extend these calculations to the $S^1$-equivariant K-theory of free loop spaces. 

\subsubsection{Oriented Bordism}
In analogy to \ref{cc}, Conner and Floyd show:
\begin{prop}[\cite{conner1979differentiable}, 15.1]If $X$ is (homotopy equivalent to) a CW-complex, then the Atiyah--Hirzebruch spectral sequence for oriented bordism degenerates at $E^2$ if and only if $\mu\co MSO_n(X)\to H_n(X)$ is surjective for all $n\geq 0$.\end{prop}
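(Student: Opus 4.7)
One direction is immediate: the map $\mu$ factors as the edge homomorphism $MSO_n(X)\twoheadrightarrow E^\infty_{n,0}$ composed with the inclusion $E^\infty_{n,0}\hookrightarrow E^2_{n,0} = H_n(X)$, and degeneration at $E^2$ forces the inclusion to be an equality.

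For the converse, first observe that $MSO_q(pt)=0$ for $q<0$, so the spectral sequence is supported in $q\geq 0$ and no differential $d^r$ can land in the bottom row. Hence the hypothesis that $\mu$ is surjective for every $n$ is equivalent to the vanishing of the differentials $d^r\colon E^r_{p,0}\to E^r_{p-r,r-1}$ issuing from the bottom row. To propagate this vanishing to higher rows, I would use the fact that the Atiyah-Hirzebruch spectral sequence is a module spectral sequence over the (trivial) one for a point: the cross product with coefficients yields a pairing $MSO_s(pt)\otimes E^r_{p,q}\to E^r_{p,q+s}$ satisfying the Leibniz rule, so every decomposable element $y\cdot\alpha$ with $y\in E^2_{p,0}$ and $\alpha\in MSO_q(pt)$ is killed by all differentials.

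The main obstacle is that, because $MSO_*$ carries $2$-torsion, the universal coefficient sequence
\[
0\to H_p(X)\otimes MSO_q \to H_p(X;MSO_q)\to \mathrm{Tor}(H_{p-1}(X),MSO_q)\to 0
\]
has a nontrivial Tor summand not reached by the module action. Following Conner-Floyd, I would handle it via the Bockstein associated to the cofibre sequence $MSO\xrightarrow{2}MSO\to MSO/2$, using that $MSO$ localized away from $2$ splits as a wedge of Eilenberg-MacLane spectra (so the spectral sequence is automatically trivial there). At the prime $2$, the Bockstein induces a map of spectral sequences of bidegree $(0,-1)$ whose image hits exactly the Tor summand; compatibility of this map with differentials, combined with the vanishing already established on the tensor summand, then forces the differentials to vanish on the Tor piece as well. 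This last reduction is the technical heart of the Conner-Floyd argument and the place where the hypothesis on $\mu$, the ring structure of $MSO_*$, and Thom's calculation of $MSO_*$ really come together.
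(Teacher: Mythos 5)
First, note that the paper does not prove this proposition at all --- it is quoted verbatim from Conner--Floyd \cite{C-F} (Theorem 15.1), so there is no internal proof to compare against and your sketch can only be judged on its own terms. The easy direction via the edge homomorphism, the reformulation of the hypothesis as the vanishing of all differentials leaving the bottom row (using $MSO_q(pt)=0$ for $q<0$, so nothing maps into that row), and the use of the $MSO_*(pt)$-module structure together with the Leibniz rule to kill all differentials on the image of $H_p(X)\otimes MSO_q$ in $E^2_{p,q}$ are all correct, and you have correctly isolated the summand $\mathrm{Tor}(H_{p-1}(X),MSO_q)$ as the remaining obstruction.

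The gap lies in how you dispose of that summand, and it stems from a reversed splitting statement: it is $MSO$ localized \emph{at} $2$ that splits as a wedge of suspensions of $H\Z_{(2)}$ and $H\Z/2$ (Thom, Wall), whereas away from $2$ the spectrum $MSO$ splits into suspensions of $BP$, whose Atiyah--Hirzebruch spectral sequence is certainly not automatically trivial. With the roles of the primes corrected, the argument closes up --- and more simply than your Bockstein proposal. Away from $2$ the coefficients $MSO_*\otimes\Z[\tfrac12]$ are torsion-free, so the Tor summand vanishes there and the module argument (which is where the hypothesis on $\mu$ genuinely enters) already gives degeneration. At $2$ the Eilenberg--MacLane splitting gives unconditional degeneration of the $2$-localized spectral sequence; since all torsion in $MSO_*$ has exponent $2$, the Tor summand consists of elements of order $2$, so any differential on it is $2$-primary torsion, is detected by the (exact, hence spectral-sequence-compatible) localization at $2$, and therefore vanishes. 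As written, your version would try to extract the content of the hypothesis at the prime $2$ via a Bockstein, where in fact nothing needs to be proven, while asserting an unconditional collapse away from $2$, which is false and would render the hypothesis on $\mu$ superfluous exactly where it is needed.
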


As we have described concrete manifold generators in \ref{es} and \ref{eps}, we get degeneration for free loop spaces of spheres and (complex and quaternionic) projective spaces. But we can prove even more in some cases:

\begin{thm}[\cite{conner1979differentiable}, 15.2]If $X$ is (homotopy equivalent to) a CW-complex for which each $H_n(X)$ ist finitely generated and has no odd torsion, then \begin{eqnarray*}MSO_n(X)\cong \bigoplus_{p+q=n} H_p(X; MSO_q)\end{eqnarray*}\end{thm}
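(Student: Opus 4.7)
The plan is to deduce collapse of the Atiyah--Hirzebruch spectral sequence from the previous proposition and then to resolve the extension problem using the well-known splittings of $MSO$ at each prime.

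First, I would show surjectivity of $\mu\colon MSO_n(X)\to H_n(X)$ in order to invoke the preceding proposition. The structural input is Wall's theorem on $MSO$: inverting $2$ yields a wedge splitting $MSO[1/2]\simeq\bigvee_i \Sigma^{n_i} H\Z[1/2]$, while $2$-locally $MSO_{(2)}$ splits as a wedge of suspensions of $H\Z$ and $H\Z/2$. In each splitting the Thom class $MSO\to H\Z$ appears as a wedge summand, so $\mu$ is split surjective after localization at every prime. Finite generation and the absence of odd torsion in $H_*(X)$ let us assemble these local surjectivities to an integral surjectivity of $\mu$, whence the preceding proposition gives collapse of the AHSS at $E^2$.

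Second, I would address the extension problem. Collapse produces a filtration on $MSO_n(X)$ with associated graded $\bigoplus_{p+q=n} H_p(X;MSO_q)$, and the goal is to show this filtration splits. At odd primes the Wall splitting identifies the $p$-local AHSS with a direct sum of AHSS's for ordinary homology theories (each concentrated in a single row), so no extension can occur; the same argument works $2$-locally using the splitting of $MSO_{(2)}$ into $H\Z$ and $H\Z/2$ summands. Combining the local pictures, which determine the global $MSO_n(X)$ under finite generation, yields the asserted isomorphism.

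The main obstacle will be the $2$-local extension problem: both $MSO_*$ and $H_*(X)$ may carry $2$-torsion, so one could a priori imagine nontrivial extensions between a $\Z/2^k$ in $H_p(X)$ at one filtration level and a $\Z/2$ in $MSO_q$ at another. Wall's splitting of $MSO_{(2)}$ into Eilenberg--MacLane summands is precisely the tool that forces these extensions to split, by reducing the $2$-local AHSS to a direct sum of AHSS's for ordinary homology with coefficients in $\Z$ or $\Z/2$, where by single-row concentration no extensions arise at all.
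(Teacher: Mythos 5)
The paper does not prove this statement at all: it is quoted verbatim from Conner--Floyd (\cite{C-F}, Theorem 15.2), with no internal proof to compare against. Judged on its own terms, your proposal contains a genuine error in its odd-primary input: $MSO[1/2]$ is \emph{not} a wedge of suspensions of $H\Z[1/2]$. At each odd prime $p$ the localization $MSO_{(p)}$ splits as a wedge of suspensions of the Brown--Peterson spectrum $BP$, which is not an Eilenberg--MacLane spectrum, and the Atiyah--Hirzebruch spectral sequence for $BP$ has nontrivial differentials in general (the first being $d_{2p-1}$, detected by the Milnor primitive $Q_1$). A telling symptom is that your argument never genuinely uses the hypothesis that $H_*(X)$ has no odd torsion: if your claimed splitting were correct, the conclusion would hold for every $X$ of finite type, but it already fails for $X=B\Z/p$ with $p$ odd, where the $MSO$-AHSS does not collapse.

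The $2$-local half of your argument is sound, granting Wall's splitting $MSO_{(2)}\simeq\bigvee_i\Sigma^{n_i}H\Z_{(2)}\vee\bigvee_j\Sigma^{m_j}H\Z/2$: each summand contributes a spectral sequence concentrated in a single row, so there are neither differentials nor extension problems $2$-locally, and the Thom class summand gives surjectivity of $\mu$ after localization at $2$. To repair the odd-primary half you must actually invoke the hypothesis: for $p$ odd, $H_*(X)_{(p)}$ is a finitely generated free $\Z_{(p)}$-module, so $E^2_{p,q}=H_p(X;MSO_q)_{(p)}\cong H_p(X)_{(p)}\otimes MSO_q$ is torsion-free; since all AHSS differentials vanish rationally, they take values in torsion and hence are zero, and the filtration extensions split because the associated graded pieces are free. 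Reassembling the $2$-local and $p$-local statements using finite generation then yields the integral isomorphism. This corrected route is close in spirit to Conner--Floyd's original argument, which works with the structure of $\Omega^{SO}_*$ (all torsion of order $2$, polynomial modulo torsion) rather than with spectrum-level splittings.
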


We can apply this theorem to these free loop spaces of spheres and complex or quaternionic projective spaces which have no odd torsion in homology. Therefore, we have additive isomorphisms
\begin{eqnarray*}MSO_*(LS^{2k}) &\cong& H_*(LS^{2k}; MSO_*(pt))\\
MSO_*(LS^{2k+1}) &\cong& H_*(LS^{2k+1})\otimes MSO_*(pt)\\
MSO_*(\cp^{2^k-1}) &\cong& H_*(L\cp^{2^k-1}; MSO_*(pt))\\
MSO_*(\hp^{2^k-1}) &\cong& H_*(L\hp^{2^k-1}; MSO_*(pt))
\end{eqnarray*}

Sadly enough, $MSO_*$ is not torsionfree, but has also $2$--torsion (a complete determination can be found in \cite{wall1960determination}). Therefore, we can deduce only that these isomorphisms hold also for the multiplicative structure if we localize at an odd prime, except in the case of odd-dimensional spheres, where the above isomorphism holds multiplicatively for all primes. 

\subsection{Computation of Coproducts}
The aim of this section is to use explicit generators for the homology of $\Omega S^n$ to compute the Goresky-Hingston loop coproduct for the based loop space and deduce via a Serre spectral sequence argument from this the corresponding coproduct structure on the homology of $LS^n$. More general computations by other methods with somewhat weaker results were already done in \cite{goresky2009loop}, 13.9.

We have to replace the generators for $H_*(\Omega S^n) \cong \mathbb{Z}[x]$ in \ref{es} by a certain pertubation to ensure transversality. More precisely, we choose a point $P$ on the equator which does not lie on the ''way backwards'' $\delta$ as the new base point and conjugate the generator presented in \ref{es} by the shortest geodesic from $P$ to the north pole. We want to compute the coproduct of $x^k$, represented by the map $(S^{n-1})^k \to \Omega S^n$ which is the $k$-th power of the map just described. Precomposing with $J$, we get a map $(S^{n-1})^k \times I \to \Omega S^n$, where we call the second variable the \textit{height}. Outside the boundary, this intersects $\Omega S^n \times \Omega S^n$ transversally at the heights $\frac{l+\frac12}{k+\frac12}$ for $l= 0,\dots, k-1$ with level sets $(S^{n-1})^l\times (S^{n-1})^{k-l-1} \to \Omega S^n\times \Omega S^n$, representing $x^l\times x^{k-l-1}$. Therefore, the Goresky--Hingston coproduct on $H_*(\Omega S^n, \{P\})$ (which is here well-defined even for integer coefficients) is given by the formula 
\[\Psi_{GH}(x^k) = \sum_{l=1}^{k-2} x^l \otimes x^{k-l-1}. \]

The coproduct on $\HH_*(S^n) \cong \Lambda(a)$ is given by $\Delta(1) = 1\otimes a + a \otimes 1$ and $\Delta(a) = a\otimes a$. The coproduct structure on the $E^2$-term of the Serre spectral sequence for $\Omega S^n \to LS^n \to S^n$ is just the tensor product of the two coproduct structures. The $E^\infty$-term is isomorphic to $E^2$ for $n$ odd and carries the subquotient coproduct structure for $n$ even for every field coefficients. Filtration issues do not occur, so this gives the coproduct structure for $LS^n$. Note that the $E^\infty$-term does \textit{not} inherit a coproduct structure for $n$ even if we choose \textit{integer} coefficients. 

\bibliographystyle{gtart}
\bibliography{StringTopology}
\end{document}